\titleformat{\subsection}[runin]
{\bfseries} {\thesubsection{.}}{0.15cm}{}[.]
\titleformat{\subsubsection}[runin]
{\em}{\thesubsubsection{.}}{0.15cm}{}[.]
\newtheorem{theoremA}{Theorem}
\newtheorem{theorem}{Theorem}[section]
\newtheorem{claim}[theorem]{Claim}
\newtheorem{corollary}[theorem]{Corollary}
\theoremstyle{definition}
\newtheorem{definition}[theorem]{Definition}
\newtheorem{remark}[theorem]{Remark}
\numberwithin{equation}{section}
\numberwithin{figure}{section}
\newcommand\Gcal{\mathcal{G}}
\newcommand\Fcal{\mathcal{F}}
\newcommand\C{\mathbb{C}}
\renewcommand\b{\mathbb{B}}
\renewcommand\c{\mathbb{C}}
\newcommand\n{\mathbb{N}}
\renewcommand\r{\mathbb{R}}
\newcommand\z{\mathbb{Z}}
\newcommand\wt{\widetilde}
\newcommand\dist{\mathrm{dist}}
\newcommand\length{\mathrm{length}}
\def\dist{\mathrm{dist}}
\def\length{\mathrm{length}}
\def\diam{\mathrm{diam}}
\begin{document}


\fancyhead[LO]{Wild holomorphic foliations of the ball}
\fancyhead[RE]{A.\ Alarc\'on}
\fancyhead[RO,LE]{\thepage}

\thispagestyle{empty}


\begin{center}

{\bf\LARGE Wild holomorphic foliations of the ball}

\medskip

%
%
{\large\bf Antonio Alarc\'on}
\end{center}


%
%
\medskip

\begin{quoting}[leftmargin={5mm}]
{\small
\noindent {\bf Abstract}\hspace*{0.1cm}
We prove that the open unit ball $\b_n$ of $\c^n$ $(n\ge 2)$ admits a nonsingular holomorphic foliation $\Fcal$ by closed complex hypersurfaces such that both the union of the complete leaves of $\Fcal$ and the union of the incomplete leaves of $\Fcal$ are dense subsets of $\b_n$. In particular, every leaf of $\Fcal$ is both a limit of complete leaves of $\Fcal$ and a limit of incomplete leaves of $\Fcal$. This gives the first example of a holomorphic foliation of $\b_n$ by connected closed complex hypersurfaces having a complete leaf that is a limit of incomplete ones. 
We obtain an analogous result for foliations by complex submanifolds of arbitrary pure codimension $q$ with $1\le q<n$. 


\noindent{\bf Keywords}\hspace*{0.1cm} 
complete Riemannian manifold,
complex submanifold,
holomorphic foliation,
noncritical holomorphic function, 
holomorphic submersion,
limit leaf.


\noindent{\bf Mathematics Subject Classification (2020)}\hspace*{0.1cm} 
32H02, 
53C12 
}
\end{quoting}



\section*{Introduction}
\label{sec:intro}


\noindent 
Motivated by the Yang problem on the existence of complete bounded complex submanifolds of the complex Euclidean spaces \cite{Yang1977,Yang1977JDG}, Globevnik constructed in \cite{Globevnik2015AM} holomorphic functions $f$ on the open unit ball $\b_n$ of $\c^n$ $(n\ge 2)$ all of whose nonempty level sets $f^{-1}( z)$ $(z\in\c)$ 
are complete as metric spaces; thus, the family of their components forms a holomorphic foliation of $\b_n$ by complete connected closed (possibly singular) complex hypersurfaces. Most of the leaves of these foliations are smooth by Sard's theorem; nevertheless, there is such a function $f$ which is noncritical, hence $\b_n$ admits a nonsingular holomorphic foliation by complete properly embedded complex hypersurfaces \cite{Alarcon2018Foliations}. 

The ball $\b_n$ also carries holomorphic foliations by closed complex hypersurfaces having both complete and incomplete leaves.
In fact, every closed complex hypersurface in $\b_n$, not necessarily complete or connected, can be embedded as a union of leaves into a holomorphic foliation $\Fcal$ of $\b_n$ by connected closed complex hypersurfaces all of whose leaves, except perhaps those in the given hypersurface, are complete \cite[Theorem 1.4]{Alarcon2018Foliations}. 
Note that the foliations constructed in \cite{Alarcon2018Foliations} 
have {\em few} incomplete leaves; for instance, the family of incomplete leaves of any such $\Fcal$
admits no limit leaves (that is, no leaf of $\Fcal$ is a limit leaf of the incomplete leaves of $\Fcal$). 

In this paper we give
new examples of nonsingular holomorphic foliations of the ball by closed complex hypersurfaces having both complete and incomplete leaves. In particular, we construct such foliations with {\em many} incomplete leaves. 
Here is a sampling of the results we obtain.
\begin{theoremA}\label{th:A}
The open unit ball $\b_n$ of $\c^n$ $(n\ge 2)$ admits a nonsingular holomorphic foliation $\Fcal$ by
connected properly embedded complex hypersurfaces such that both the union of the complete leaves of $\Fcal$ and the union of the incomplete leaves of $\Fcal$ are dense subsets of $\b_n$.
\end{theoremA}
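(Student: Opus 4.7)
The plan is to build the defining function $f\in \Ocal(\b_n)$ of the foliation as a uniform limit on compacta of an inductively constructed sequence $\{f_k\}$ of noncritical holomorphic functions, arranged so that connected components of its level sets pass arbitrarily close to prescribed points of two fixed countable dense subsets of $\b_n$ and appear as complete leaves and incomplete leaves respectively.

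Fix countable dense sets $\{p_j\}_{j\in\n}$ and $\{q_j\}_{j\in\n}$ in $\b_n$, a normal exhaustion of $\b_n$ by smoothly bounded strictly pseudoconvex domains $D_k$ with $\overline{D}_k\subset D_{k+1}$, and a schedule $\epsilon_k\searrow 0$ to be chosen sufficiently small. Start from a Globevnik--Alarc\'on noncritical function $f_0\in\Ocal(\b_n)$ all of whose fibers are complete \cite{Globevnik2015AM,Alarcon2018Foliations}. Inductively produce a noncritical holomorphic $f_k$ on a neighborhood of $\overline{D}_k$ approximating $f_{k-1}$ to order $\epsilon_k$ in the $C^1$ norm on $\overline{D}_{k-1}$ by means of the Mergelyan-type theorem for noncritical holomorphic functions from \cite{Alarcon2018Foliations}. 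Cauchy estimates then keep the level sets of $f_k$ uniformly close to those of $f_{k-1}$ on the previous scale, so that leaves built at earlier stages persist up to controlled isotopy.

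Alternate between two moves. At odd steps $k=2j-1$, perform a small perturbation producing a compact connected component of some fiber $f_k^{-1}(w_j)$ contained in $D_k$ and passing within $\epsilon_k$ of $p_j$; provided the tail $\sum_{\ell>k}\epsilon_\ell$ is small enough, this component survives to the limit as a \emph{complete} leaf accumulating on $p_j$, since it is trapped inside a fixed compact subset of $\b_n$. At even steps $k=2j$, push an ``incompleteness finger'' of some fiber $f_k^{-1}(w_j')$ through a point near $q_j$ and then deep into $\b_n\setminus D_k$, along an arc reaching within distance $1/k$ of $\di \b_n$ and of Euclidean arc length bounded by $2^{-k}$. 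Choosing the subsequent $\epsilon_\ell$ summable against the derivatives of the finger ensures that a nearby arc survives in the evolving level set of the limit with total length still finite; the resulting leaf therefore admits a divergent path of finite intrinsic length, hence is \emph{incomplete}, and accumulates on $q_j$.

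Setting $f=\lim f_k$ gives a noncritical holomorphic function on $\b_n$; the connected components of its fibers form a nonsingular holomorphic foliation $\Fcal$ by properly embedded complex hypersurfaces, and by construction both the complete and the incomplete leaves are dense in $\b_n$. The main obstacle is the even-step construction: the incompleteness fingers must be realized by perturbations that are small in the Mergelyan norm (preserving noncriticality and the features built at previous stages) yet geometrically long enough to reach close to $\di\b_n$ with controlled arc length; the quantitative interplay between the schedule $\{\epsilon_k\}$, the shapes and positions of successive fingers, and the derivative estimates along them is the crux of the argument. The analogue in pure codimension $q$ with $1\le q<n$ follows by the same induction, replacing $f$ by a noncritical holomorphic submersion $\b_n\to \c^q$ and using the corresponding Mergelyan-type theorem.
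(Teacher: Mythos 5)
There are two genuine gaps, and each one kills a core step of your scheme. First, the odd-step move is impossible: a compact connected component of a fibre $f_k^{-1}(w_j)$ contained in $D_k\subset\b_n\subset\c^n$ would be a compact complex analytic subvariety of $\c^n$ of dimension $n-1\ge 1$, which cannot exist (each coordinate function would attain a maximum on it and hence be constant by the maximum principle). So complete leaves of a foliation of $\b_n$ by closed complex hypersurfaces can never be ``trapped inside a fixed compact subset of $\b_n$''; they necessarily cluster on $\partial\b_n$, and completeness has to be forced by making every divergent path have infinite length. This is exactly what the paper does, by inserting tangent labyrinths $L_j$ in spherical shells approaching $\partial\b_n$ and arranging, at \emph{every} stage, that the designated fibres avoid them (conditions (6$_j$) and (7$_j$)); starting from a Globevnik--Alarc\'on function and hoping completeness of its fibres persists under uniform-on-compacta perturbations does not work, since completeness is not stable under such perturbations.

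Second, the even-step ``incompleteness finger'' is precisely the point where your argument has no proof: approximation of $f_{k}$ by later functions only on the compact sets $\overline D_\ell$ gives no control whatsoever on the level sets of the limit $f$ near $\partial\b_n$, so there is no reason a short arc reaching within $1/k$ of the boundary survives as (or limits to) a \emph{divergent} path of finite length in a leaf of $f$; you acknowledge this as ``the crux'' but do not resolve it, and a quantitative bookkeeping of $\epsilon_\ell$ against derivatives along the finger will not by itself control the fibre outside the compacta where the approximation holds. The paper's solution is structurally different: each approximant $f_j$ is a holomorphic submersion defined on all of $\c^n$, and Forstneri\v c's Oka--Weil--Cartan theorem for submersions is applied with interpolation to high order along $V=f_{j-1}^{-1}(\{a_0,\dots,a_j\})$, so that the fibres over the prescribed values $a_k$ are literally frozen on $r_j\overline\b_n$ at every later stage (condition (3$_j$)); in the limit each component of $f^{-1}(a_k)$ is a component of the restriction to $\b_n$ of a closed complex submanifold of $\c^n$, and incompleteness then follows immediately from boundedness of the ball. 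Without some device of this kind (or an equivalent way to control the leaf up to the boundary), your induction does not yield incomplete leaves, and in fact both halves of the density claim remain unproved. A smaller remark: the paper gets density more cheaply by taking the sets of prescribed values $A,C$ dense in the target $\c^q$ and using that the limit submersion is an open map, rather than steering leaves through prescribed points of $\b_n$.
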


Note that every leaf of the foliation $\Fcal$ in the theorem is a limit leaf of both the family of complete leaves of $\Fcal$ and the family of incomplete leaves of $\Fcal$. This gives the first example of a foliation of $\b_n$ by connected closed complex hypersurfaces having a complete leaf that is a limit of incomplete ones.

We go further and provide foliations of $\b_n$, with the same properties as above, by closed complex submanifolds of arbitrary pure codimension $q$ with $1\le q<n$. The foliations we construct are formed by the fibres of holomorphic submersions $\b_n\to\c^q$; we give precise statements of our main results 
in Section \ref{sec:MR}. Moreover, it is clear that our results can be easily adapted to any bounded pseudoconvex Runge domain of $\c^n$ in place of the ball $\b_n$, for one just has to combine our construction method with the use of the holomorphically convex labyrinths of compact sets constructed in such domains by Charpentier and Kosi\'nski in \cite{CharpentierKosinski2020}. The fact that every pseudoconvex domain of $\c^n$ admits a (possibly singular) holomorphic foliation by complete closed complex hypersurfaces was proved by Globevnik in \cite{Globevnik2016MA}. Every Stein manifold of dimension $>1$ equipped with an arbitrary Riemannian metric too, as was subsequently shown in \cite{Alarcon2018Foliations}. The first construction of nonsingular holomorphic foliations of $\b_n$ by smooth complete closed complex submanifolds of arbitrary pure codimension can be found also in \cite{Alarcon2018Foliations}. We refer to Alarc\'on and Forstneri\v c \cite[Sec.\ 1]{AlarconForstneric2020MZ} for an up-to-date survey on this topic.


\section{Main results and outline of the paper
}\label{sec:MR}

\noindent Given a holomorphic submersion $f\colon X\to\c^q$ on a complex manifold $X$ with $\dim X>q\ge 1$,  the family of components of the nonempty fibres $f^{-1}(z)$ $(z\in \c^q)$ of $f$ form a nonsingular holomorphic foliation of $X$ by smooth closed (properly embedded) connected complex submanifolds of codimension $q$. A foliation of $X$ obtained in this way is called a {\em (codimension-$q$) holomorphic submersion foliation}.

Here is the first main result of this paper.
\begin{theorem}\label{th:intro-main}
Let  $n$ and $q$ be integers with $1\le q<n$. For any pair of disjoint countable sets $A$ and $C$ in $\c^q$ there is a surjective holomorphic submersion $f\colon\b_n\to\c^q$ satisfying the following properties.
\begin{itemize}
\item[\rm (i)] If $a\in A$, then no component of the fibre $f^{-1}(a)\subset \b_n$ is complete.

\smallskip
\item[\rm (ii)] The fibre $f^{-1}(c)\subset \b_n$ is complete for all $c\in C$.
\end{itemize}
\end{theorem}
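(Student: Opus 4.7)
The plan is to construct $f$ as the uniform limit on compacta of a sequence of holomorphic submersions $f_j\colon U_j\to\c^q$ defined on open neighborhoods $U_j$ of a $\b_n$-convex exhausting sequence of compacts $K_1\subset K_2\subset\cdots$ of $\b_n$. Let $A=\{a_k\}_{k\in\n}$ and $C=\{c_k\}_{k\in\n}$, fixed as disjoint countable sets. I would also fix once and for all:
\begin{itemize}
\item a Globevnik-type labyrinth $\{L_j\}_{j\in\n}$ of pairwise disjoint $\b_n$-convex compacta with $L_j\cap K_{j-1}=\emptyset$, of such density near $\di\b_n$ that any path in $\b_n$ leaving every compact set either meets infinitely many $L_j$ or has infinite length in their complement;
\item for each $k\in\n$, a sequence of points $p_{k,m}\in\b_n$ with $\dist(p_{k,m},\di\b_n)\to 0$, pairwise disjoint across $k$ and $m$, with $\sum_m|p_{k,m+1}-p_{k,m}|<\infty$, and each consecutive pair joined by an embedded real-analytic arc $\gamma_{k,m}$ of length $<2|p_{k,m+1}-p_{k,m}|$, lying in a ball $B_{k,m}\Subset\b_n$ of radius shrinking fast enough that $\sum_m\diam B_{k,m}<\infty$.
\end{itemize}

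At step $j$ of the induction, given $f_{j-1}$ defined near $K_{j-1}\cup L_{j-1}\cup\bigcup_{k\le j,\,m\le j}\overline{B_{k,m}}$, I would produce $f_j$ that (a) approximates $f_{j-1}$ to within $\varepsilon_j$ on $K_{j-1}$, (b) satisfies $|f_j(z)-c_k|>\delta_j$ for all $z\in L_j$ and all $k\le j$, and (c) satisfies $f_j(p_{k,m})=a_k$ for all $k\le j$ and $m\le j$, with jet along $\gamma_{k,m}$ chosen so that $\gamma_{k,m}$ itself lies in the fiber $f_j^{-1}(a_k)$. The tool for realizing (a)–(c) simultaneously is the Oka principle for noncritical holomorphic submersions from Stein manifolds to $\c^q$ (as in the noncritical-function / submersion techniques already used in \cite{Alarcon2018Foliations,AlarconForstneric2020MZ}): starting from $f_{j-1}$, patch it on disjoint $\b_n$-convex pieces with local models, namely $f_{j-1}$ on $K_{j-1}$, a model on $L_j$ whose values avoid small balls around $c_1,\dots,c_j$, and local submersions near each $B_{k,m}$ whose level set $a_k$ contains $\gamma_{k,m}$ and which interpolate the prescribed jet at $p_{k,m}$; then apply Runge–Oka–Weil approximation on $K_{j-1}\cup L_j\cup\bigcup\overline{B_{k,m}}$. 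To also secure surjectivity, interleave a third type of move that forces $f_j$ to take a prescribed value from a fixed countable dense subset of $\c^q$ at a new interior point at each step.

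If $\varepsilon_j,\delta_j$ are chosen summable and shrinking fast enough, the limit $f=\lim f_j$ is a well-defined holomorphic submersion $\b_n\to\c^q$ (the submersion property survives because the derivative at each prescribed interpolation point is preserved, and stability of maximal rank is preserved by uniform approximation on compacta). Condition (c) passes to the limit, so each $\gamma_{k,m}\subset f^{-1}(a_k)$; concatenating the $\gamma_{k,m}$ gives a divergent path of finite length in the connected component of $f^{-1}(a_k)$ containing $p_{k,1}$, whence no component of $f^{-1}(a_k)$ is complete, proving (i). Condition (b) implies $f^{-1}(c_k)\cap L_j=\emptyset$ for all $j\ge k$, so any divergent path in $f^{-1}(c_k)$ avoids all but finitely many $L_j$ and therefore has infinite length by the defining property of the labyrinth; hence $f^{-1}(c_k)$ is complete, proving (ii). Surjectivity follows from the dense set of prescribed values plus the open mapping property.

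The main obstacle will be the simultaneous execution of (a)–(c) in the inductive step. The delicate points are: maintaining maximal rank of $df_j$ globally while forcing it to take prescribed values at countably many points and on prescribed arcs; ensuring that the local models near $B_{k,m}$, near $L_j$, and the given $f_{j-1}$ agree well enough on overlaps (achieved by choosing the $L_j$, $B_{k,m}$ and $K_j$ pairwise disjoint and $\b_n$-convex so the Oka–Weil approximation on the disjoint union gives a global correction); and arranging the constants $\varepsilon_j$ small enough that both the lower bounds on $L_j$ and the interpolation equalities survive in the limit without destroying the submersion property. These are standard but careful estimates in the Alarcón–Forstnerič framework, and they are the technical heart of the argument.
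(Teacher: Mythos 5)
Your overall skeleton (an inductive Oka--Weil type construction, labyrinths forcing infinite length of divergent paths in the fibres over $C$, interpolation conditions handling $A$, and extra moves for surjectivity) parallels the paper, but your mechanism for condition (i) has a genuine gap. Statement (i) demands that \emph{no} component of $f^{-1}(a_k)$ be complete, whereas your construction only produces one incomplete component per point of $A$: the concatenated path through the points $p_{k,m}$ has finite length and lies in a single connected component of $f^{-1}(a_k)$, and you have no control whatsoever over the remaining components of that fibre. In fact the labyrinth conditions you impose work against you here: they tend to make the other components complete, so the conclusion ``whence no component of $f^{-1}(a_k)$ is complete'' does not follow. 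The paper's solution is structurally different at exactly this point: each $f_j$ is taken to be a holomorphic submersion defined on all of $\c^n$, and Forstneri\v c's Oka--Weil--Cartan theorem for submersions is applied with interpolation (to high order) along the closed complex subvariety $V=f_{j-1}^{-1}(\{a_0,\ldots,a_j\})$, so that the fibres over $a_0,\ldots,a_j$ are frozen on $r_j\overline\b_n$ at every later stage. In the limit, every component of $f^{-1}(a_k)\subset\b_n$ is then a component of $(f_{j_0}|_{\b_n})^{-1}(a_k)$ for some $j_0$, i.e.\ of the trace on $\b_n$ of a smooth closed complex submanifold of $\c^n$, and boundedness of the ball yields a finite-length divergent path in \emph{each} such component; this is the missing idea in your proposal.

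Two secondary points would also need repair. First, forcing the real-analytic arcs $\gamma_{k,m}$ to lie exactly in $f_j^{-1}(a_k)$, and to stay there after all later approximation steps, is not delivered by the tools you invoke: the Forstneri\v c submersion theorem interpolates along closed \emph{complex} subvarieties, not along real arcs, and mere approximation on $\overline{B_{k,m}}$ of a local model whose fibre contains $\gamma_{k,m}$ does not keep the arc in the fibre of the corrected global map. Second, your surjectivity argument is insufficient: an open subset of $\c^q$ containing a countable dense set need not equal $\c^q$ (consider $\c^q$ minus a point), so ``dense prescribed values plus open mapping'' does not give surjectivity. The paper instead arranges $\lambda_i\overline\b_q\subset f_j(\Omega_i)$ for all $j\ge i$ with $\overline\Omega_i$ a fixed compact ball in $\b_n$ and $\lambda_i\to+\infty$, and passes this to the limit by a compactness argument on preimages, which does yield $f(\b_n)=\c^q$.
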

Since the submersion $f$ in the theorem is surjective, $f^{-1}(z)$ is empty for no $z\in \c^q$, hence for no $z\in A\cup C$. 
Thus, choosing $A$ and $C$ to be nonempty,
Theorem \ref{th:intro-main} provides codimension-$q$ holomorphic submersion foliations of the ball $\b_n$ having both complete and incomplete leaves. In particular, it gives the first example of such a foliation whose family of incomplete leaves admits limit leaves (see Corollary \ref{co:dense} for a more precise result; see also Corollary \ref{co:infinitely} for another result in this direction). Recall that if $\Fcal$ is a foliation of a manifold $X$ and $\Gcal\subset \Fcal$ is a family of leaves of $\Fcal$, then a leaf $V\in \Fcal$ (possibly $V\notin\Gcal$) is said to be a {\em limit leaf} of $\Gcal$ if $V$ is contained in the topological  closure of $\Gcal\setminus \{V\}$ as subset of $X$; that is, if $V\subset \overline{\bigcup_{W\in\Gcal\setminus\{V\}}W}$. 

Choosing the sets $A$ and $C$ in Theorem \ref{th:intro-main} to be both dense subsets of $\c^q$ we obtain the following result that strengthens Theorem \ref{th:A}.
\begin{corollary}\label{co:dense}
For any pair of integers $n$ and $q$ with $1\le q<n$ there is
 a codimension-$q$ holomorphic submersion foliation $\Fcal$ of $\b_n$ such that both the union of the complete leaves of $\Fcal$ and the union of the incomplete leaves of $\Fcal$ are dense subsets of $\b_n$. In particular, every leaf of $\Fcal$
is both a limit leaf of the family of complete leaves of $\Fcal$ and a limit leaf of the family of incomplete leaves of $\Fcal$.
\end{corollary}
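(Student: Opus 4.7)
The plan is to derive the corollary directly from Theorem \ref{th:intro-main} by choosing the countable sets $A$ and $C$ to be both dense in $\c^q$. Concretely, I would first pick a pair of disjoint countable dense subsets $A,C\subset\c^q$; for instance, take $A$ to be the set of points with coordinates in $\Q+i\Q$, and $C$ to be the translate of that set by a fixed vector whose components are algebraically independent from the rationals (so $A\cap C=\emptyset$ while both remain countable and dense).

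Next, I would invoke Theorem \ref{th:intro-main} with this pair $(A,C)$ to obtain a surjective holomorphic submersion $f\colon\b_n\to\c^q$ satisfying conditions (i) and (ii). Let $\Fcal$ be the codimension-$q$ holomorphic submersion foliation whose leaves are the connected components of the fibres $f^{-1}(z)$, $z\in\c^q$. By (ii), every component of $f^{-1}(c)$ for $c\in C$ is a complete leaf of $\Fcal$, while by (i), every component of $f^{-1}(a)$ for $a\in A$ is an incomplete leaf of $\Fcal$. Thus the union of the complete leaves of $\Fcal$ contains $f^{-1}(C)$, and the union of the incomplete leaves contains $f^{-1}(A)$. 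Since $f$ is a holomorphic submersion it is an open map, so for every nonempty open $U\subset\b_n$ the image $f(U)\subset\c^q$ is open; because both $A$ and $C$ are dense in $\c^q$, the intersections $f(U)\cap A$ and $f(U)\cap C$ are nonempty, which means that $U$ meets both $f^{-1}(A)$ and $f^{-1}(C)$. This establishes the density of the two unions in $\b_n$.

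For the final ``in particular'' clause, fix a leaf $L\in\Fcal$ and a point $p\in L$; write $z_0=f(p)\in\c^q$, so that $L$ is the connected component of $f^{-1}(z_0)$ through $p$. For any neighborhood $U$ of $p$, the openness of $f$ implies $f(U)$ is an open neighborhood of $z_0$; since $C\setminus\{z_0\}$ is dense in $\c^q$ (as $\c^q$ has no isolated points), there is $c\in C$ with $c\in f(U)$ and $c\neq z_0$. Any component of $f^{-1}(c)$ meeting $U$ is a complete leaf distinct from $L$ (since $c\neq z_0$), showing that $L$ is a limit leaf of the family of complete leaves. Replacing $C$ by $A$ in this argument gives the analogous statement for incomplete leaves, completing the proof.

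The argument is essentially routine once Theorem \ref{th:intro-main} is available: the only mildly delicate point is the very last step, where one must ensure that the approximating leaf is genuinely distinct from $L$, which is handled by exploiting that $\c^q$ has no isolated points so that one can pick $c\neq z_0$ (respectively $a\neq z_0$) arbitrarily close to $z_0$. The entire burden of the corollary is therefore pushed onto Theorem \ref{th:intro-main}, whose proof is the substantive contribution.
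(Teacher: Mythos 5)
Your proposal is correct and follows exactly the route the paper takes: the paper derives Corollary \ref{co:dense} simply by applying Theorem \ref{th:intro-main} with $A$ and $C$ chosen to be disjoint countable dense subsets of $\c^q$, the density and limit-leaf statements then following from surjectivity and openness of the submersion $f$ just as you argue (the limit-leaf step is the observation recorded at the end of Section \ref{sec:prelim}). Your filled-in details, including that each component of a complete fibre $f^{-1}(c)$ is itself a complete leaf and that the approximating leaf lies in a different fibre and hence is distinct from $L$, are all accurate.
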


For $z\in \c^q\setminus(A\cup C)$ we do not know whether the fibre $f^{-1}(z)\subset\b_n$ of the submersion $f$ provided by Theorem \ref{th:intro-main} is complete or incomplete. Regarding this concern, we also prove in this paper the following result in which the completeness character of all the fibres of $f$ is controlled at the cost of assuming extra conditions on the sets $A$ and $C$.
\begin{theorem}\label{th:intro}
Let  $n$ and $q$ be integers with $1\le q<n$ and let $\wt A$ and $\wt C$ be a pair of disjoint closed discrete subsets of $\c^q$. Let $A$ be the union of $\wt A$ with a closed discrete subset of $\c^q\setminus (\wt A\cup\wt C)$ and call $C=\c^q\setminus A$. Then the conclusion of Theorem \ref{th:intro-main} holds; that is, there is a surjective holomorphic submersion $f\colon\b_n\to\c^q=A\cup C$ satisfying the following properties.
\begin{itemize}
\item[\rm (i)]  If $a\in A\supset\wt A$, then no component of the fibre $f^{-1}(a)\subset\b_n$ is complete.

\smallskip
\item[\rm (ii)] The fibre $f^{-1}(c)\subset\b_n$ is complete for all $c\in C\supset \wt C$.
\end{itemize}
Thus, the family of connected components of the fibres $f^{-1}(z)$ $(z\in \c^q)$ of $f$ is a codimension-$q$ holomorphic submersion foliation of $\b_n$ all of whose leaves, except precisely those in fibres over points in $A$, are complete. 
\end{theorem}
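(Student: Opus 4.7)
The plan is to bootstrap Theorem~\ref{th:intro-main} using a Globevnik-type holomorphic labyrinth inside $\b_n$. The key observation is that, under the hypotheses on $A$ and $C$, every $c\in C$ either lies in the countable prescribed set $\wt C$ or has strictly positive Euclidean distance from $A$. Indeed, if $c\in C\setminus\wt C$, then $c\notin\wt A\cup\wt C$, so $c$ lies in the open set $V:=\c^q\setminus(\wt A\cup\wt C)$, inside which the additional part $B:=A\setminus\wt A$ is closed and discrete; combining $\dist(c,B)>0$ with the fact that $\wt A$ is closed in $\c^q$ and $c\notin\wt A$ yields $\dist(c,A)>0$. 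So it suffices to arrange completeness of $f^{-1}(c)$ in two complementary ways: directly for $c\in\wt C$, exactly as in Theorem~\ref{th:intro-main} since $\wt C$ is countable; and automatically via a labyrinth meeting, for every $c\in\c^q$ bounded away from $A$.

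Concretely, I would fix a normal exhaustion $\{K_k\}_{k\in\n}$ of $\b_n$ by smoothly bounded strongly pseudoconvex compact sets, together with compact labyrinth pieces $L_k\subset K_{k+1}\setminus K_k$ chosen so that every proper arc $\gamma\colon[0,1)\to\b_n$ with $|\gamma(t)|\to 1$ that meets $L_k$ for all sufficiently large $k$ has infinite Euclidean length. Enumerate $A=\{a_j\}_{j\in\n}$ and $\wt C=\{c_j\}_{j\in\n}$. I would then inductively build surjective holomorphic submersions $f_k\colon U_k\to\c^q$ on open neighborhoods $U_k\supset K_k$, each close to $f_{k-1}$ in the $\mathscr{C}^1$-norm on $K_{k-1}$, satisfying three additional conditions at stage $k$: (a) for every $j\le k$, the fibre $f_k^{-1}(a_j)$ contains a divergent arc leaving $K_k$ whose Euclidean length is bounded by a constant depending only on $j$; (b) for every $j\le k$, the fibre $f_k^{-1}(c_j)$ meets the labyrinth piece $L_k$; and (c) the same meeting property holds for $f_k^{-1}(c)$ for every $c\in\c^q$ with $\dist(c,\{a_1,\ldots,a_k\})\ge 1/k$. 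The inductive step uses the same post-composition-with-small-perturbation technology that underlies Theorem~\ref{th:intro-main}: modify $f_{k-1}$ by a holomorphic perturbation of the target coordinates supported near $K_{k+1}\setminus K_{k-1}$, introducing transverse intersections with $L_k$ for the required fibres and extending the escape arc over $a_k$ through a narrow channel reaching close to $\di K_k$, while preserving submersivity and staying close to $f_{k-1}$ on $K_{k-1}$.

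In the limit $f=\lim f_k$ on $\b_n$, property~(i) is immediate: the uniformly bounded arcs in $f_k^{-1}(a_j)$ converge to a divergent arc of finite Euclidean length inside a component of $f^{-1}(a_j)$, which is therefore metrically incomplete. Property~(ii) splits along the dichotomy above. For $c=c_j\in\wt C$, condition~(b) yields $f^{-1}(c_j)\cap L_k\neq\emptyset$ for all $k\ge j$, so $f^{-1}(c_j)$ is complete by the defining property of the labyrinth. For $c\in C\setminus\wt C$, setting $\delta:=\dist(c,A)>0$ gives $\dist(c,\{a_1,\ldots,a_k\})\ge\delta>1/k$ for all large $k$, and condition~(c) again produces completeness. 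The final foliation statement follows from (i) and (ii), since every leaf over $a\in A$ is a component of $f^{-1}(a)$ and thus incomplete, while every leaf over $c\in C$ is a closed subset of the complete metric space $f^{-1}(c)$ and thus complete.

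The principal technical obstacle is the simultaneous fulfillment of (a) and (c) at each stage: one must force the fibres over a uniformly thick subset of $\c^q$ to cross $L_k$ while still carving out a short finite-length escape arc over the distinguished point $a_k$. This compatibility is secured by the Oka--Cartan extension machinery already underlying the proof of Theorem~\ref{th:intro-main}, combined with a standard genericity argument ensuring transversality of the perturbed fibres to the labyrinth pieces.
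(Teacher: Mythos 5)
Your reduction of (ii) to the dichotomy ``$c\in\wt C$, or else $\dist(c,A)>0$'' is exactly the right one (and is the one the paper uses), but the completeness mechanism you build on it is inverted, and this is a genuine gap. The labyrinths available (the tangent labyrinths of Definition \ref{def:labyrinth}, or the Charpentier--Kosi\'nski ones) give the \emph{avoidance} property {\rm (6$_j$)}: any path crossing the shell $R_j\b_n\setminus r_j\overline\b_n$ while \emph{missing} $L_j$ has length $>1$. They do not give the property you postulate, that every proper arc \emph{meeting} $L_k$ for all large $k$ has infinite length: the components of $L_k$ are compact balls sitting in shells of summable width, so touching one component per shell costs only a summable detour, and nothing prevents a finite-length proper arc of this kind. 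More decisively, even if such a property held, your inductive conditions (b) and (c) only force the \emph{fibre} to intersect $L_k$; a divergent path inside that fibre can simply go around the labyrinth, so no length estimate for the path (hence no completeness of the fibre) follows. The correct requirement is the opposite one: the fibres over $\wt C$, and over every point at definite distance from $A$, must \emph{avoid} the labyrinths for all large indices. The paper arranges this by controlling the image of the labyrinth rather than individual fibres: condition {\rm (7$'_j$)} keeps $f_j(\overline O_j)$ (a neighborhood of $L_j$) off the closed discrete set $\wt C$, and condition {\rm (8$'_j$)} forces $f_j(L_j)$ into $\{|w|>\lambda_j\}\cup\{w\colon\dist(w,\{a_0,\ldots,a_j\})<1/\lambda_j\}$; then for fixed $c$ with $\dist(c,A)>0$ one gets $f^{-1}(c)\cap L_j=\varnothing$ for all large $j$, and completeness follows from {\rm (6$_j$)}. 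These are finitely many conditions on the single compact set $f_j(L_j)$, achievable by post-composing $f_{j-1}$ with affine maps on a neighborhood of the components of $L_j$ disjoint from $V=f_{j-1}^{-1}(\{a_0,\ldots,a_j\})$ before invoking Forstneri\v c's theorem; your condition (c), quantified over all $c$ with $\dist(c,\{a_1,\ldots,a_k\})\ge 1/k$, has no comparable mechanism and points the construction in the wrong direction.

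There is also a secondary gap in your argument for (i): the short escape arcs you produce lie in fibres of $f_k$, not of $f$, and since the later perturbations are only small on $K_{k-1}$ there is no reason their limits lie in $f^{-1}(a_j)$. The paper avoids this by taking every $f_j$ to be a submersion defined on all of $\c^n$ and interpolating to high order along $V=f_{j-1}^{-1}(\{a_0,\ldots,a_j\})$, which yields the fibre-stabilization property {\rm (3$_j$)}; in the limit each component of $f^{-1}(a)$ is then a component of $f_{j_0}^{-1}(a)\cap\b_n$ for an entire submersion $f_{j_0}$, and incompleteness follows because $f_{j_0}^{-1}(a)$ is a closed submanifold of $\c^n$ while $\b_n$ is bounded. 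Mere $\Cscr^1$-closeness on the previous compact set, as in your sketch, does not give this persistence of the incomplete fibres.
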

In the special cases when the set $A\subset\c^q$ is either empty or unitary, Theorem \ref{th:intro} (which in these particular instances implies Theorem \ref{th:intro-main}) follows from the results in \cite{Alarcon2018Foliations} except for the property that the holomorphic submersion $f\colon\b_n\to\c^q$ is surjective, which, nevertheless, is not difficult to ensure. The role of this condition in Theorems \ref{th:intro-main} and \ref{th:intro} is to guarantee that the sets $A$ and $C$ lie in the image of the holomorphic submersion $f$, and hence the fibre $f^{-1}(z)$ is empty for no $z\in A\cup C$.

The first assertion in the following corollary to Theorem \ref{th:intro} is easily obtained from the results in \cite{Alarcon2018Foliations},
although it is not stated there. 
The second assertion is new.
\begin{corollary}\label{co:infinitely}
For any pair of integers $n$ and $q$ with $1\le q<n$ there is a codimension-$q$ holomorphic submersion foliation of $\b_n$ all of whose leaves, except precisely countably-infinitely many among them, are complete. 

Furthermore, there is such a foliation whose family of incomplete leaves admits both complete limit leaves and incomplete limit leaves.
\end{corollary}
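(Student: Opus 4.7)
I will deduce both assertions from Theorem \ref{th:intro} by a careful choice of the sets $\wt A$, $\wt C$ and of the auxiliary closed discrete subset of $\c^q\setminus(\wt A\cup\wt C)$ used to define $A$. For the first assertion, take $\wt A$ to be any countably infinite closed discrete subset of $\c^q$, $\wt C=\emptyset$, and add no further points, so that $A=\wt A$ and $C=\c^q\setminus\wt A$. Theorem \ref{th:intro} then produces a surjective holomorphic submersion $f\colon\b_n\to\c^q$ whose fibres over $A$ have only incomplete components and whose fibres over $C$ are complete. Surjectivity of $f$ makes every fibre nonempty, so the number of incomplete leaves is at least $|A|=\aleph_0$; and second countability of $\b_n$ forces each fibre to have at most countably many components, so the number of incomplete leaves is at most countable. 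Hence it is precisely countably infinite.

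For the new second assertion, I need to arrange that $A$ accumulates both at points of $A$ and at points of $C$. Working in the first coordinate of $\c^q$ with $e_1=(1,0,\ldots,0)$, and writing $i=\sqrt{-1}$, set
$$\wt A=\{k e_1: k\in\n\},\qquad \wt C=\{\zero\},\qquad B=\{(k+1/j)e_1: k,j\in\n,\ j\ge 2\}\cup\{(i/k)e_1: k\in\n\}.$$
Every accumulation point of $B$ in $\c^q$ belongs to $\wt A\cup\{\zero\}=\wt A\cup\wt C$, so $B$ is a closed discrete subset of $\c^q\setminus(\wt A\cup\wt C)$, as required. Putting $A=\wt A\cup B$ and $C=\c^q\setminus A$, Theorem \ref{th:intro} supplies a surjective holomorphic submersion $f\colon\b_n\to\c^q$ whose fibres over $A$ have only incomplete components and whose fibres over $C$ are complete.

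The main step is then to translate accumulation of base points in $\c^q$ into accumulation of leaves in $\b_n$. Because $f$ is a holomorphic submersion, around every $p\in\b_n$ there are local coordinates in which $f$ is a linear projection; hence for any sequence $z_m\to f(p)$ in $\c^q$ one can choose $p_m\in f^{-1}(z_m)$ with $p_m\to p$. Applying this with $p$ in the (incomplete) fibre over $k e_1\in\wt A$ and $z_m=(k+1/m)e_1\in A$ shows that the leaf through $p$ is a limit leaf of the family of incomplete leaves; applying it with $p$ in the (complete) fibre over $\zero\in\wt C$ and $z_m=(i/m)e_1\in A$ produces a complete limit leaf of the same family. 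The expected difficulty is purely bookkeeping: checking that $B$ is discrete in the correct relative topology and that the specified sequences actually lie in $A$. All the substantive work is absorbed by Theorem \ref{th:intro}.
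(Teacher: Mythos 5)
Your proposal is correct and takes essentially the same approach as the paper, which applies Theorem \ref{th:intro} with $\wt A=\{a\}$, $\wt C=\{c\}$ and an auxiliary closed discrete sequence in $\c^q\setminus\{a,c\}$ accumulating exactly at $\{a,c\}$, then invokes the fact (stated in the paper's preliminaries) that fibres of a submersion over converging base points accumulate on every component of the limit fibre. The only cosmetic difference is that you use two separate configurations; just note that your second one also has precisely countably-infinitely many incomplete leaves (by the same counting as in your first paragraph), so it witnesses both assertions simultaneously, as the phrase ``such a foliation'' requires.
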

%
%
\begin{proof}
It suffices to apply Theorem \ref{th:intro} with subsets $\wt A=\{a\}$, $\wt C=\{c\}$, and $A=\wt A\cup\{a_0,a_1,a_2,\ldots\}$ of $\c^q$, where $a\neq c$ and $a_0,a_1,a_2,\ldots$ is a sequence of pairwise different points in $\c^q\setminus\{a,c\}$ whose limit set is $\wt A\cup\wt C=\{a,c\}$. We may for instance choose $a_0,a_1,a_2,\ldots$ with $\lim_{j\to\infty}a_{2j+1}=a$ and $\lim_{j\to\infty}a_{2j}=c$. 
\end{proof}

%
%
We prove Theorem \ref{th:intro-main} in Section \ref{sec:proof-main}. 
As in \cite{Alarcon2018Foliations}, we shall obtain a holomorphic submersion $f\colon \b_n\to\c^q$ satisfying the conclusion of the theorem as the limit of a sequence of holomorphic submersions $f_j\colon \b_n\to\c^q$ $(j\in\n)$. The main new ingredient in the construction in the present paper is that we ensure in the recursive process that every component of the fibre $f^{-1}(a)$ of $f$ over each point $a$ in the set $A$ extends to a smooth closed complex submanifold  of codimension $q$ in $\c^n$, thereby guaranteeing that no component of $f^{-1}(a)\subset\b_n$ is complete by the boundedness of $\b_n$. In order to ensure that property of the limit submersion $f\colon\b_n\to\c^q$ we choose each $f_j$ extending to a holomorphic submersion on $\c^n$. Nevertheless, since the fibres of $f$ over all points in $C$ are required to be complete, the sequence $f_j\colon \c^n\to\c^q$ shall only converge uniformly in compact subsets of $\b_n$. 
The main technical tool in our construction is an Oka-Weil-Cartan type theorem for holomorphic submersions from any Stein manifold $X$ to $\c^q$ $(1\le q<\dim X)$ due to Forstneri\v c (see \cite{Forstneric2003AM} or \cite[\textsection 9.12-9.16]{Forstneric2017}).
The proof of Theorem \ref{th:intro}, which is similar to that of Theorem \ref{th:intro-main}, is explained in Section \ref{sec:proof}.

%
%
\begin{remark}
Our method of proof also ensures approximation on compact subsets of the ball. To be precise, an inspection of the proofs in this paper shows that if we are given a holomorphic submersion $f_0\colon \C^n\to\C^q$ $(1\le q<n)$ and a polynomially convex compact set $K\subset\b_n$, then there is a holomorphic submersion $f\colon\b_n\to\c^q$ satisfying the conclusion of Theorem \ref{th:intro-main} (respectively, Theorem \ref{th:intro}) which is as close as desired to $f_0$ uniformly on $K$. 
\end{remark}
\begin{remark}
It is clear that Theorems \ref{th:intro-main} and  \ref{th:intro} can be adapted to  any bounded pseudoconvex Runge domain of $\c^n$ in place of the ball $\b_n$. Indeed, one just has to adapt the proofs by replacing our tangent labyrinths (see Definition \ref{def:labyrinth}) by those labyrinths of compact sets constructed by Charpentier and Kosi\'nski in \cite{CharpentierKosinski2020}. The boundedness of the domain is used to ensure the incompleteness of some of the leaves of the foliation; see \eqref{eq:component}. However, this assumption is not completely necessary: for instance, it suffices to ask the domain to contain no smooth closed complex submanifold of $\c^n$.
\end{remark}
\begin{remark}
The seminal construction of Globevnik \cite{Globevnik2015AM} gives a holomorphic function on $\b_n$ $(n\ge 2)$ whose real part is unbounded on every divergent path $\gamma\colon [0,1)\to \b_n$ with finite length; it follows that every level set of such a function is a complete closed (possibly singular) complex hypersurface in $\b_n$. Charpentier and Kosi\'nski \cite{CharpentierKosinski2021} extended Globevnik's result by constructing holomorphic functions $f$ on $\b_n$ with the property that $f(\gamma([0,1)))$ is everywhere dense in $\c$ for any path $\gamma$ as above, and showed that such functions are in fact abundant: they form a residual, densely lineable, and spaceable subset of the space of all holomorphic functions on $\b_n$ endowed with the locally uniform convergence topology. (These results hold with any pseudoconvex domain in place of the ball \cite{Globevnik2016MA,CharpentierKosinski2021}. The author thanks the referee for drawing his attention to the paper \cite{CharpentierKosinski2021}.) We expect that the submersions given in Theorems \ref{th:intro-main} and  \ref{th:intro} are abundant too. 
\end{remark}


\section{Notation and preliminaries}\label{sec:prelim}
  
\noindent For a set $A$ in a topological space $X$ we denote by $\overline A$, $\mathring A$, and $b A=\overline A\setminus \mathring A$ the topological closure, interior, and frontier of $A$ in $X$, respectively. For $B\subset X$ we write $A\Subset B$ when $\overline A\subset\mathring B$. 
 A point $x\in X$ is called a {\em limit point} of a subset $A\subset X$ if every neighborhood of $x$ in $X$ intersects $A\setminus\{x\}$. The set $A\subset X$ is called {\em discrete} if no point of $A$ is a limit point of $A$ itself; that is, every point $a\in A$ admits a neighborhood in $X$ which is disjoint from $A\setminus\{a\}$. 
A subset $A\subset X$ is closed and discrete if and only if no point of $X$ is a limit point of $A$. 

Let $\Fcal$ be a foliation of a manifold $X$
and let $\Gcal\subset \Fcal$ be a family of leaves of $\Fcal$. A leaf $V\in \Fcal$ (possibly, $V\notin\Gcal$) is said to be a {\em limit leaf} of $\Gcal$ if $V$ lies in the topological closure of $\Gcal\setminus\{V\}$ as subset of $X$; that is, if $V\subset \overline{\bigcup_{W\in\Gcal\setminus\{V\}}W}$. This is equivalent to that every point of $V$ is a limit point of $\Gcal\setminus\{V\}$ as subset of $X$. We emphasize that, in this definition, the family $\Gcal$ is not assumed to be closed as subset of $X$; that is, it is not required that $\bigcup_{W\in\Gcal}W\subset X$ be a closed set. 

We write $\n=\{1,2,3,\ldots\}$ and $\z_+=\n\cup\{0\}$. We shall denote by $|\cdot|$, $\dist(\cdot,\cdot)$, $\length(\cdot)$, and $\diam(\cdot)$ the Euclidean norm, distance, length, and diameter in $\r^n$ for any $n\in\n$. We denote by $\b_n=\{z\in\c^n\colon |z|<1\}$ the open unit ball in $\c^n$ for any integer $n\ge 2$. Given a point $z\in\c^n$, a set $A\subset\c^n$, and a number $\zeta\in\c$, we write $z+\zeta A=\{z+\zeta a\colon a\in A\}$.

In the proof of Theorems \ref{th:intro-main} and \ref{th:intro} we shall use as a key ingredient the labyrinths of compact sets in open spherical shells of $\c^n$ that were first introduced in \cite{AlarconGlobevnikLopez2016Crelle}; see also \cite{AlarconGlobevnik2017C2,AlarconForstneric2017PAMS}. 
%
%
\begin{definition}[\text{\cite[Def.\ 2.2]{Alarcon2018Foliations}}]\label{def:labyrinth}
A compact set $L$ contained in an open spherical shell $R\b_n\setminus r\overline\b_n=\{z\in\c^n\colon r<|z|<R\}\subset\c^n$ $(n\ge 2$,  $0<r<R$) is called a {\em tangent labyrinth} if $L$ has finitely many connected components, $T_1,\ldots,T_l$ $(l\in\n)$, and $L$ is the support of a finite tidy collection of tangent balls in the sense of \cite[Def.\ 1.3 and 1.4]{AlarconGlobevnikLopez2016Crelle}; equivalenty, $L$ satisfies the following conditions.
\begin{itemize}
\item Each component $T_j$ of $L$ is a closed round ball in a real affine hyperplane in $\c^n=\r^{2n}$ which is orthogonal to the position vector of the center $x_j$ of the ball $T_j$. 

\smallskip
\item If for some $i,j\in\{1,\ldots,l\}$ the centers $x_i$ of $T_i$ and $x_j$ of $T_j$ satisfy $|x_i|=|x_j|$, then the radii of $T_i$ and $T_j$ are equal. If $|x_i|<|x_j|$, then $T_i\subset |x_j|\b_n$.
\end{itemize}
\end{definition}

A smooth closed complex submanifold $V$ in $\b_n$ $(n\ge 2)$ is said to be {\em complete} if the Riemannian metric $g$ induced on $V$ by the Euclidean one in $\c^n$ is complete in the classical sense. That is, the Riemannian manifold $(V,g)$ is a complete metric space, meaning that Cauchy sequences are convergent; equivalently, $(V,g)$ is geodesically complete in the sense that every geodesic $\alpha(t)$ in $(V,g)$ is defined for all values of time $t\in\r$. Completeness of $V$ is equivalent to that every path $\gamma \colon [0,1)\to\b_n$ with $\gamma([0,1))\subset V$ and $\lim_{t\to1}|\gamma(t)|=1$ have infinite Euclidean length. Such a path is called a {\em divergent} path or a {\em proper} path in $V$.
If $V$ is not complete, then it is said to be {\em incomplete}. We refer, for instance, to the monograph by do Carmo \cite{doCarmo1992} for an introduction to Riemannian geometry.

Let $X$ be a Stein manifold. 

A complex-valued function on a subset $A\subset X$ is said to be {\em holomorphic} if it is holomorphic in an unspecified open set in $X$ containing $A$. 
A holomorphic map $f=(f_1,\ldots,f_q)\colon X\to\c^q$ $(1\le q\le \dim X)$ is said to be {\em submersive} at a point $x\in X$ if the differential $df_x\colon T_xX\to T_{f(x)}\c^q\cong \c^q$ is surjective; that is, if the differentials of the component functions of $f$ at the point $x$ are linearly independent: 
$
	(df_1\wedge\cdots\wedge df_q)|_x\neq 0.
$
 The map $f$ is called a {\em submersion} if it is submersive at all points in $X$. 
 
\begin{definition}[\text{\cite[p.\ 148]{Forstneric2003AM}}]\label{def:qcoframe}
Let $q\in\n$. A {\em $q$-coframe} on a Stein manifold $X$ is  a $q$-tuple of continuous differential $(1,0)$-forms $\theta=(\theta_1,\ldots,\theta_q)$ on $X$ which are pointwise linearly independent at every point of $X$:
\[
	(\theta_1\wedge\cdots\wedge \theta_q)|_x\neq 0\quad \text{for all $x\in X$}.
\]
\end{definition}
If $X$ admits a $q$-coframe, then $\dim X\ge q$. If $f=(f_1,\ldots,f_q)\colon X\to\c^q$ is a holomorphic submersion, then the differential $df=(df_1,\ldots,df_q)$ is a $q$-coframe on $X$. Conversely, if $X$ admits a $q$-coframe for some integer $q$ with  $1\le q<\dim X$, then $X$ carries a holomorphic submersion to $\c^q$ (see  \cite[Theorem 2.5]{Forstneric2003AM}). 

If we are given a holomorphic submersion $f\colon X\to\c^q$, then the fibres $f^{-1}(z)$ $(z\in\c^q)$ of $f$ form a nonsingular holomorphic foliation $\Fcal$ of $X$ by smooth closed complex submanifolds of pure codimension $q$. A foliation $\Fcal$ obtained in this way is called a {\em codimension-$q$ holomorphic submersion foliation} of $X$ or just a {\rm holomorphic submersion foliation} of $X$ when the codimension is clear from the context. It turns out that if $z\in f(X)$ and $\{z_1,z_2,z_3,\ldots\}\subset f(X)\setminus\{z\}$ is a sequence with $\lim_{j\to\infty} z_j=z$, then every component of $f^{-1}(z)$ is a limit leaf of the family of leaves $\Gcal=\{W\in\Fcal\colon W\subset f^{-1}(z_j)\text{ for some } j\in\n\}$.


\section{Proof of Theorem \ref{th:intro-main}}\label{sec:proof-main}

\noindent Let  $n$ and $q$ be integers with $1\le q<n$ and let $A$ and $C$ be a pair of disjoint countable sets in $\c^q$. We assume without loss of generality that both $A$ and $C$ are infinite, hence bijective to $\z_+$. We establish an order in each of these sets and write 
\[
	A=\{a_0,a_1,a_2,\ldots\}
	\quad\text{and}\quad 
	C=\{c_0,c_1,c_2,\ldots\}.
\]
Choose a sequence of positive numbers $0<\lambda_0<\lambda_1<\lambda_2<\cdots$ with
\begin{equation}\label{eq:lambda>}
	\lambda_j>\max\{|a_0|,\ldots,|a_j|,|c_0|,\ldots,|c_j|\}\quad \text{for all $j\in\z_+$}
\end{equation}
and
\begin{equation}\label{eq:limnu}
	\lim_{j\to\infty}\lambda_j=+\infty.
\end{equation}
Also let $0<r_0<R_0<\rho_0<r_1<R_1<\rho_1<r_2<R_2<\rho_2<\cdots$ be a sequence of positive numbers with
\begin{equation}\label{eq:limrj}
	\lim_{j\to\infty}r_j=1.
\end{equation}
It follows that $\lim_{j\to\infty}R_j=\lim_{j\to\infty}\rho_j=1$ as well. 
Fix an open Euclidean ball $\Omega_0$ in $\c^n$ with $\overline \Omega_0\subset \rho_0\b_n\setminus R_0\overline\b_n$ and choose a holomorphic submersion $f_0\colon \c^n\to\c^q$ with
\begin{equation}\label{eq:f0}
	\lambda_0\overline\b_q\subset f_0(\Omega_0).
\end{equation}
Such an $f_0$ clearly exists; in fact, since holomorphic submersions $\c^n\to\c^q$ are open maps, every such a submersion satisfies the required condition up to post-composition with a suitable affine transformation of $\c^q$. 
Also fix a positive number $\epsilon_0>0$  and set $L_0=\varnothing$ and $O_0=\varnothing$. 

The recursive construction of a holomorphic submersion $f\colon\b_n\to\c^q$ satisfying the conclusion of Theorem \ref{th:intro-main} is enclosed in the following assertion.
\begin{claim}\label{cl:induction}
There is a sequence $S_j=\{f_j,\epsilon_j,L_j,O_j,\Omega_j\}$ $(j\in\n)$, where
\begin{itemize}
\item $f_j\colon \c^n\to\c^q$ is a holomorphic submersion,

\smallskip
\item $\epsilon_j$ is a positive number,

\smallskip
\item $L_j$ is a tangent labyrinth in the open spherical shell $R_j\b_n\setminus r_j\overline\b_n$ (see Definition \ref{def:labyrinth}),

\smallskip
\item $O_j$ is an open neighborhood of $L_j$ in $\c^n$ such that $\overline O_j\subset R_j\b_n\setminus r_j\overline\b_n$, and

\smallskip
\item $\Omega_j$ is an open Euclidean ball in $\c^n$ with $\overline \Omega_j\subset \rho_j\b_n\setminus R_j\overline\b_n$,
\end{itemize}
such that the following conditions are satisfied for all $j\in\n$.
\begin{itemize}
\item[\rm (1$_j$)] $|f_j(z)-f_{j-1}(z)|<\epsilon_j$ for all $z\in r_j\overline\b_n$.

\smallskip
\item[\rm (2$_j$)] $\lambda_i\overline\b_q\subset f_j(\Omega_i)$ for all $i\in\{0,\ldots,j\}$.

\smallskip
\item[\rm (3$_j$)] $f_j^{-1}(a_i)\cap r_j\overline\b_n=f_{j-1}^{-1}(a_i)\cap r_j\overline\b_n$ for all $i\in\{0,\ldots,j\}$. 

\smallskip
\item[\rm (4$_j$)] $0<\epsilon_j<\epsilon_{j-1}/2$.

\smallskip
\item[\rm (5$_j$)] If $\phi\colon \b_n\to\c^q$ is a holomorphic map such that $|\phi(z)-f_{j-1}(z)|<2\epsilon_j$ for all $z\in r_j\overline\b_n$, then $\phi$ is submersive at every point in $\rho_{j-1}\overline\b_n$.

\smallskip
\item[\rm (6$_j$)] If $\gamma\colon[0,1]\to \b_n$ is a path satisfying $|\gamma(0)|\le r_j$, $|\gamma(1)|\ge R_j$, and $\gamma([0,1])\cap L_j=\varnothing$, then $\length(\gamma)>1$.

\smallskip
\item[\rm (7$_j$)] $f_j(\overline O_i)\cap \{c_0,\ldots,c_i\}=\varnothing$ for all $i\in\{0,\ldots,j\}$. 
\end{itemize}
\end{claim}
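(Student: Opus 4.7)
The plan is to prove Claim~\ref{cl:induction} by induction on $j\in\n$; the base case $j=0$ is the data fixed before the claim, so the work lies in the induction step. Given $S_0,\ldots,S_{j-1}$, I would choose the ingredients of $S_j=(f_j,\epsilon_j,L_j,O_j,\Omega_j)$ in the order $\epsilon_j\to(L_j,O_j)\to\Omega_j\to f_j$. Pick $\epsilon_j$ satisfying (4$_j$) and small enough that Cauchy estimates for the submersion $f_{j-1}$ on the compact set $\rho_{j-1}\overline\b_n\Subset r_j\b_n$ give (5$_j$); by the same smallness together with openness of submersions and the induction hypotheses (2$_{j-1}$) and (7$_{j-1}$), any perturbation of $f_{j-1}$ by at most $\epsilon_j$ on $r_j\overline\b_n$ preserves (2$_j$) and (7$_j$) for $i<j$ (using $\overline\Omega_i,\overline O_i\subset r_j\overline\b_n$). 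The tangent labyrinth $L_j\subset R_j\b_n\setminus r_j\overline\b_n$ with property (6$_j$) is provided by the Alarc\'on--Globevnik--L\'opez construction \cite{AlarconGlobevnikLopez2016Crelle} as refined in \cite{Alarcon2018Foliations}; I would take $O_j$ to be a thin open neighborhood of $L_j$ in the same shell, consisting of a disjoint union of small $\Oscr(\c^n)$-convex thickenings of its components. Finally, choose $\Omega_j$ to be any open Euclidean ball with $\overline\Omega_j\subset\rho_j\b_n\setminus R_j\overline\b_n$, arranged to be disjoint from the closed complex submanifold $\Sigma:=\bigcup_{i=0}^j f_{j-1}^{-1}(a_i)\subset\c^n$; this is possible since $\Sigma$ has complex codimension $q\ge 1$ and is hence nowhere dense in the annulus.

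To construct $f_j$ I would invoke Forstneri\v c's Oka--Weil--Cartan approximation theorem for holomorphic submersions (see \cite[Theorem 1.1]{Forstneric2003AM} or \cite[\textsection 9.12--9.16]{Forstneric2017}). Set $K:=r_j\overline\b_n\cup\overline O_j\cup\overline\Omega_j$, which is $\Oscr(\c^n)$-convex by the separating spherical shells. Define a holomorphic submersion $\tilde f$ on an open neighborhood of $K$ piecewise: $\tilde f=f_{j-1}$ on a neighborhood of $r_j\overline\b_n$; on a neighborhood of $\overline\Omega_j$ (disjoint from $\Sigma$), $\tilde f$ is a suitable affine submersion with $\tilde f(\Omega_j)\Supset\lambda_j\overline\b_q$; and $\tilde f=f_{j-1}+\xi$ on a neighborhood $W_2$ of $\overline O_j$, where $\xi\colon W_2\to\c^q$ is holomorphic, vanishes on $\Sigma\cap W_2$, and satisfies $(f_{j-1}+\xi)(\overline O_j)\cap\{c_0,\ldots,c_j\}=\varnothing$. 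Then $\tilde f|_{\Sigma\cap K}=f_{j-1}|_{\Sigma\cap K}$ automatically, and the cited theorem produces $f_j\colon\c^n\to\c^q$ a holomorphic submersion with $f_j|_\Sigma=f_{j-1}|_\Sigma$ and $\|f_j-\tilde f\|_K$ as small as prescribed. Conditions (1$_j$), (2$_j$) and (7$_j$) for $i=j$ follow directly for sufficiently fine approximation. Condition (3$_j$) follows from the interpolation: the inclusion $f_{j-1}^{-1}(a_i)\cap r_j\overline\b_n\subset f_j^{-1}(a_i)\cap r_j\overline\b_n$ is immediate from $f_j|_\Sigma=f_{j-1}|_\Sigma$, and the reverse holds because $|f_{j-1}-a_i|$ is bounded below outside any tubular neighborhood of $\Sigma_i$ in $r_j\overline\b_n$, while inside such a neighborhood the implicit function theorem applied to the submersion $f_j-a_i$ (which contains $\Sigma_i$ in its zero set) forces that zero set to coincide with $\Sigma_i$.

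The main obstacle is the construction of $\xi$ on $W_2$: the vanishing condition $\xi|_{\Sigma\cap W_2}=0$ precludes a uniform translation, whereas $f_{j-1}(\overline O_j)$ may already contain some of the forbidden values $c_l$. I would work componentwise: on a small ball $W_k\subset W_2$ containing one component $T_k$ of $L_j$ and meeting exactly one $\Sigma_{i(k)}$, set $\xi=h\cdot(f_{j-1}-a_{i(k)})$ with $h\in\c\setminus\{-1\}$ a scalar to be chosen; then $\tilde f=(1+h)f_{j-1}-h\, a_{i(k)}$ is a submersion, and the forbidden equation $\tilde f(z)=c_l$ on $\overline O_j\cap W_k$ translates, via $t=1/(1+h)$, into $f_{j-1}(z)=a_{i(k)}+t(c_l-a_{i(k)})$. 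Since $f_{j-1}(\overline O_j\cap W_k)$ is a compact subset of $\c^q$, its intersection with the complex line through $a_{i(k)}$ and $c_l$ is a compact subset of that line, so the bad values of $t$ form a compact subset of $\c$, and their finite union over $l\in\{0,\ldots,j\}$ remains a compact proper subset of $\c$: a generic $h$ works. Components of $W_2$ disjoint from $\Sigma$ admit a generic constant $\xi$ by the same compactness argument, and components meeting several $\Sigma_i$'s are handled similarly by choosing $\xi$ in the globally generated ideal sheaf of $\Sigma\cap W_k$ (Cartan's theorem A). Once $\xi$ is chosen and $\tilde f$ assembled, Forstneri\v c's theorem closes the induction.
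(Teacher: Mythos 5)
Your overall architecture is the paper's (induction; Cauchy estimates for (5$_j$); a labyrinth for (6$_j$); Forstneri\v c's theorem with approximation on a polynomially convex compact and interpolation on $\Sigma=f_{j-1}^{-1}(\{a_0,\ldots,a_j\})$), but the central step fails as written. The theorem you invoke requires as input a \emph{single} holomorphic submersion defined on an open neighborhood of $K\cup\Sigma$; this is exactly how the paper applies it, with $\psi$ defined on a neighborhood $W$ of $r_j\overline\b_n\cup V\cup L_j\cup\overline\Omega_j$ (Claim \ref{cl:2}). Your model $\tilde f$ is only defined near $K$, and near a labyrinth component $T_k$ meeting $\Sigma$ you take $\tilde f=(1+h)f_{j-1}-h\,a_{i(k)}$ on $W_k$, which agrees with $f_{j-1}$ only \emph{on} $\Sigma$, not on any open set. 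Since the component of $\Sigma$ passing through $T_k$ is a closed, noncompact submanifold of $\c^n$, it leaves $W_k$, and any neighborhood of $\Sigma$ on which the interpolation data lives must overlap both $W_k$ and the region where the data is $f_{j-1}$; on that open overlap the two formulas differ (by the identity principle they cannot be restrictions of one holomorphic map), so there is no submersion on a neighborhood of $K\cup\Sigma$ realizing your piecewise prescription, and the theorem cannot deliver simultaneously $\|f_j-\tilde f\|_K$ small and $f_j|_\Sigma=f_{j-1}|_\Sigma$. This is precisely the difficulty the paper's proof is organized around: it does \emph{not} perturb on the components of $L_j$ meeting $V$ (condition {\rm (A)}: $\psi=f_{j-1}$ on $r_j\overline\b_n\cup V\cup\Lambda_V$), and instead secures {\rm (7$_j$)} there by the disjointness of $A$ and $C$ together with the choice of $\eta$ in \eqref{eq:eta} and a labyrinth whose components have diameter $<\eta$ (\eqref{eq:diamLj}, \eqref{eq:LambdaV}); the translation/dilation trick is applied only on $\Lambda_0$ and $\overline\Omega_j$, which are disjoint from $V$, after excising the transition shell $\Delta_2\setminus\mathring\Delta_1$ so that $W$ in \eqref{eq:W} is a disjoint union and the piecewise definition is genuinely holomorphic. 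Your proposal imposes no smallness of the labyrinth components relative to $V$ and $\{c_0,\ldots,c_j\}$, so the conflict at components crossing $\Sigma$ is unavoidable in your setup; your generic-scalar line argument is fine as a value-avoidance computation, but it cannot be fed into the theorem.

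Two further hypotheses of the theorem are left unverified. First, the homotopy condition: one must produce a $q$-coframe on all of $\c^n$ restricting to the differential of the model near $K\cup\Sigma$ (the paper's condition {\rm (D)}, arranged by making every local model a constant plus a \emph{scalar multiple} of $f_{j-1}$ and interpolating the scalar via \eqref{eq:mu}). With an arbitrary affine submersion near $\overline\Omega_j$, and especially with an arbitrary section $\xi$ of the ideal sheaf on components meeting several $\Sigma_i$'s, this is not addressed; in the latter case $f_{j-1}+\xi$ need not even remain a submersion on $W_k$. Second, polynomial convexity of your $K=r_j\overline\b_n\cup\overline O_j\cup\overline\Omega_j$ does not follow from ``separating spherical shells'': already the polynomial convexity of $r_j\overline\b_n\cup L_j$ is a nontrivial fact cited from \cite{AlarconForstneric2017PAMS}, and $\overline\Omega_j$ is adjoined via Kallin's lemma as in \eqref{eq:Kallin}; if you insist on thickening $L_j$ to $\overline O_j$ before approximating, you must prove convexity of the thickened set, whereas the paper sidesteps this by choosing $O_j$ \emph{after} $f_j$, using only $f_j(L_j)\cap\{c_0,\ldots,c_j\}=\varnothing$ and compactness.
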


We defer the proof of Claim \ref{cl:induction} to later on. Granted the claim, the proof of  Theorem \ref{th:intro-main} is completed as follows.

Let $S_j=\{f_j,\epsilon_j,L_j,O_j,\Omega_j\}$ $(j\in\n)$ be a sequence of tuples provided by Claim \ref{cl:induction}.
It follows from  \eqref{eq:limrj} and properties {\rm (1$_j$)} and {\rm (4$_j$)} that the sequence of holomorphic submersions $f_j\colon\c^n\to\c^q$ $(j\in\n)$ converges uniformly on compact sets in $\b_n$ to a holomorphic map
\begin{equation}\label{eq:f=limfj}
	f=\lim_{j\to\infty}f_j\colon\b_n\to\c^q
\end{equation}
satisfying
\begin{multline}\label{eq:f-fj-1}
	|f(z)-f_{j-1}(z)| \le \sum_{i\ge j}|f_i(z)-f_{i-1}(z)| <\sum_{i\ge j} \epsilon_i < \epsilon_j(1+\sum_{i=1}^{+\infty}  \frac1{2^i}) =
	\\
	=2\epsilon_j<\epsilon_{j-1}\quad \text{for all $z\in r_j\overline\b_n$, $j\in\n$}.
\end{multline}
It turns out that $f$ is a holomorphic submersion by \eqref{eq:f-fj-1} and properties {\rm (5$_j$)}.
(Despite each $f_j$ is defined everywhere on $\c^n$, the convergence of this sequence is guaranteed only on $\b_n$ in view of \eqref{eq:limrj} and properties {\rm (1$_j$)} and {\rm (4$_j$)}. Therefore, the limit map $f$ in \eqref{eq:f=limfj} is ensured to be defined only on the open unit ball $\b_n$; that is not a misprint.) 

We claim that the holomorphic submersion $f$ in \eqref{eq:f=limfj} satisfies the conclusion of Theorem \ref{th:intro-main}.
Indeed, note first that 
\begin{equation}\label{eq:surjective}
	\lambda_i\overline\b_q\subset f(\overline \Omega_i)\subset f(\b_n)\quad \text{for all $i\in\z_+$}.
\end{equation}	
Indeed, choose $i\in\z_+$ and a point $z_0\in \lambda_i\overline\b_q$. By \eqref{eq:f0} and properties {\rm (2$_j$)}, $j\in\n$, there is for each $j\ge i$ a point $\zeta_j\in \Omega_i$ with $f_j(\zeta_j)=z_0$. By compactness of $\overline\Omega_i$ we may assume, up to passing to a subsequence if necessary, that $\lim_{j\to\infty}\zeta_j=\zeta\in\overline\Omega_i$. It turns out that
\begin{eqnarray*}
	|f(\zeta)-z_0| & = & |f(\zeta)-f_j(\zeta_j)|
	\\
	& \le & |f(\zeta)-f(\zeta_j)|+|f(\zeta_j)-f_j(\zeta_j)|\quad \text{for all $j\ge i$}.
\end{eqnarray*}
Since $f=\lim_{j\to\infty} f_j$ and $\zeta=\lim_{j\to\infty} \zeta_j$, the continuity of $f$ ensures that the right-hand term of this inequality goes to zero as $j$ goes to infinity, and hence $f(\zeta)=z_0$. This proves the former inclusion in \eqref{eq:surjective}; the latter one is obvious. Together with \eqref{eq:limnu}, this shows that $f\colon\b_n\to\c^q$ is surjective, as required.

Let us check condition {\rm (i)}. Pick an arbitrary point $a_k\in A=\{a_0,a_1,a_2,\ldots\}$ and let $V$ be a connected component of $f^{-1}(a_k)\subset\b_n$. Since $f\colon\b_n\to\c^q$ is surjective we have that $f^{-1}(a_k)\neq\varnothing$, hence $V\neq\varnothing$. Take an integer $j_0\ge k\in\z_+$ so large that $V\cap r_{j_0}\b_n\neq\varnothing$; such a number exists since $\lim_{j\to\infty} r_j=1$ (see \eqref{eq:limrj}). From properties {\rm (3$_j$)}, $j\ge j_0+1\in\n$, and taking into account that the sequence $\{r_j\}_{j\in\z_+}$ is increasing, we infer that 
\begin{equation}\label{eq:fi-1ak}
	f_i^{-1}(a_k)\cap r_{j_0}\overline\b_n= f_{j_0}^{-1}(a_k)\cap r_{j_0}\overline\b_n
	\quad \text{for all $i\ge j_0\ge k$}.
\end{equation}
Since $f=\lim_{j\to\infty} f_j$ (see \eqref{eq:f=limfj}), this implies that
\begin{equation}\label{eq:f-1ak}
	f^{-1}(a_k)\cap r_{j_0}\b_n= f_{j_0}^{-1}(a_k)\cap r_{j_0}\b_n.
\end{equation}
Indeed, the inclusion $f^{-1}(a_k)\cap r_{j_0}\b_n\supset f_{j_0}^{-1}(a_k)\cap r_{j_0}\b_n$ is obvious. For the other inclusion reason by contradiction and assume that there exists $z_0\in r_{j_0}\b_n \setminus f_{j_0}^{-1}(a_k)$ with $f(z_0)=a_k$. Since the set  $r_{j_0}\b_n \setminus f_{j_0}^{-1}(a_k)$ is open, $a_k=f(z_0)\in f(r_{j_0}\b_n \setminus f_{j_0}^{-1}(a_k))$, and $f\colon\b_n\to\c^q$ is a holomorphic submersion, hence an open map, it turns out that there is a number $s>0$ such that $a_k+s\b_q\subset f(r_{j_0}\b_n \setminus f_{j_0}^{-1}(a_k))\subset\c^q$. Thus, since $f=\lim_{j\to\infty}f_j$ and each $f_j\colon\c^n\to\c^q$ is a holomorphic submersion, there is an integer $i\ge j_0$ so large that $a_k+ \frac{s}2\b_q\subset f_i( r_{j_0}\b_n \setminus f_{j_0}^{-1}(a_k))$, and hence $a_k\in f_i( r_{j_0}\b_n \setminus f_{j_0}^{-1}(a_k))$. It follows that $f_i^{-1}(a_k)\cap r_{j_0}\b_n\not\subset f_{j_0}^{-1}(a_k)$, which contradicts \eqref{eq:fi-1ak}, thereby showing that $f^{-1}(a_k)\cap r_{j_0}\b_n\subset f_{j_0}^{-1}(a_k)\cap r_{j_0}\b_n$. This proves \eqref{eq:f-1ak}.

Now, since $V$ is a component of $f^{-1}(a_k)$, \eqref{eq:f-1ak} ensures that $V\cap r_{j_0}\b_n$ is a (nonempty) union of connected components of $f_{j_0}^{-1}(a_k)\cap r_{j_0}\b_n$. Therefore, by the connectedness of $V$ and the identity principle, we have that 
\begin{equation}\label{eq:component}
	\text{$V$ is a component of $f_{j_0}^{-1}(a_k)\cap\b_n=(f_{j_0}|_{\b_n})^{-1}(a_k)$},
\end{equation}
which prevents $V$ to be complete. Indeed, choose a path $\gamma\colon[0,1]\to f_{j_0}^{-1}(a_k)$ with $\gamma(0)\in V\subset f_{j_0}^{-1}(a_k)$, $|\gamma(1)|>1$, and $\length(\gamma)<+\infty$; such a path exists by \eqref{eq:component} and the fact that the smooth complex submanifold $f_{j_0}^{-1}(a_k)$ is closed in $\c^n$. Thus, there is a (unique) $t_0\in(0,1)$ such that $|\gamma(t_0)|=1$ and $|\gamma(t)|<1$ for all $t\in [0,t_0)$. By connectedness and taking into account \eqref{eq:component} once again, it turns out that $\gamma|_{[0,t_0)}$ assumes values in $V$ and $\gamma|_{[0,t_0)}\colon[0,t_0)\to V$ is a divergent path with finite length; note that $\length(\gamma|_{[0,t_0)})<\length(\gamma)<+\infty$. This implies that  $V$ is not complete, as claimed, and proves {\rm (i)}.

Finally, let us check condition {\rm (ii)}. Recall that $C=\{c_0,c_1,c_2,\ldots\}$ and pick an arbitrary point $c_k\in C$. 
We claim first that
  \begin{equation}\label{eq:f-1ckL}
	f^{-1}(c_k)\cap L_i=\varnothing\quad\text{for all $i\ge k$}.
\end{equation}
Indeed, reason by contradiction and assume that $c_k\in f(L_i)$ for some $i\ge k$. Since $L_0=\varnothing$, we have that $i\ge 1$. Arguing as above, since $O_i$ is an open neighborhood of $L_i$ in $\b_n$ and $f\colon\b_n\to\c^q$ and each $f_j\colon\c^n\to\c^q$ are holomorphic submersions, hence open maps, there is an integer $l\ge i\ge 1$ so large that $c_k\in f_l(O_i)$. Taking into account that $i\ge k$, this implies that $c_k\in f_l(O_i)\cap \{c_0,\ldots,c_i\}$ and so this intersection is not empty. Since $l\ge i$, this contradicts {\rm (7$_l$)} and thereby proves \eqref{eq:f-1ckL}. 

Next, we claim that \eqref{eq:f-1ckL} and properties {\rm (6$_j$)}, $j\ge 1$, guarantee that every proper path $\gamma\colon [0,1[\to\b_n$ assuming values in $f^{-1}(c_k)$ has infinite Euclidean length. Indeed, let $\gamma$ be such a path and take an integer $l\ge k$ so large that $|\gamma(0)|<r_l$; since $\gamma(0)\in\b_n$, such an $l$ exists by \eqref{eq:limrj}. By properness of $\gamma$ there is a sequence $0<s_1<t_1<s_2<t_2<\cdots$ such that $\lim_{i\to\infty} s_i=\lim_{i\to\infty} t_i=1$ and $|\gamma(s_i)|< r_{l+i}$ and $|\gamma(t_i)|> R_{l+i}$ for all $i\in \n$. 
It follows that
\[
	\length(\gamma)\ge \sum_{i\ge 1} \length(\gamma|_{[s_i,t_i]}) 
	\stackrel{\text{\eqref{eq:f-1ckL},{\rm (6$_{l+i}$)}}}{\ge} \sum_{i\ge 1}1=+\infty,
\]
where in the last inequality we have used that $l+i\ge 1$. Therefore, the smooth closed complex submanifold $f^{-1}(c_k)$ of $\b_n$ is complete.  This proves condition {\rm (ii)} and concludes the proof of Theorem \ref{th:intro-main} under the assumption that Claim \ref{cl:induction} holds.


\begin{proof}[Proof of Claim \ref{cl:induction}]
We proceed by induction. The basis is provided by the tuple $S_0$ formed by the holomorphic submersion $f_0\colon \c^n\to\c^q$, the number $\epsilon_0>0$, the sets $L_0=\varnothing$ and $O_0=\varnothing$, and the ball $\Omega_0$ satisfying \eqref{eq:f0} which were fixed prior to stating the claim. Note that condition {\rm (2$_0$)} is ensured by \eqref{eq:f0} and that {\rm (7$_0$)} is trivially satisfied.
 Conditions {\rm (1$_0$)}, {\rm (3$_0$)}, {\rm (4$_0$)}, and {\rm (5$_0$)} are vacuous whereas {\rm (6$_0$)} does not hold true; nevertheless, this is not any shortcoming since we shall not use them in the construction of the tuple $S_1$. 
Thus, for the inductive step we fix an integer $j\in\n$, assume that for each $i\in\{0,\ldots, j-1\}$ we already have a tuple $S_i=\{f_i,\epsilon_i,L_i,O_i,\Omega_i\}$ for which {\rm (2$_i$)} and {\rm (7$_i$)} are satisfied, and let us construct a tuple $S_j=\{f_j,\epsilon_j,L_j,O_j,\Omega_j\}$ satisfying all conditions {\rm (1$_j$)}--{\rm (7$_j$)}. 

First of all fix $\epsilon_j>0$ satisfying conditions {\rm (4$_j$)} and {\rm (5$_j$)}; such a number exists by the Cauchy estimates in view of the submersivity of $f_{j-1}\colon \c^n\to\c^q$ and the compactness of $\rho_{j-1}\overline\b_n\Subset r_j\overline\b_n$. 

Denote
\begin{equation}\label{eq:V}
	V=f_{j-1}^{-1}(\{a_0,\ldots,a_j\})\subset \c^n
\end{equation}
and note that $V$ is a smooth closed complex submanifold in $\c^n$ of pure codimension $q$ which is not empty in view of \eqref{eq:lambda>} and condition {\rm (2$_{j-1}$)}. To be more precise, these conditions and the fact that $\overline\Omega_i\subset \rho_i\b_n$ for all $i\in\{0,\ldots,j-1\}$ ensure that $f_{j-1}^{-1}(a_i)\cap \rho_i\b_n\neq\varnothing$ for every such $i$; on the other hand, the set $f_{j-1}^{-1}(a_j)$ could be empty. In particular, we have that 
\[
	V\cap \rho_{j-1}\b_n\neq\varnothing. 
\]
By continuity of $f_{j-1}$ and compactness of $V\cap R_j\overline\b_n\supset V\cap \rho_{j-1}\b_n\neq\varnothing$ and of $\{c_0,\ldots,c_j\}$, there is a positive number $\eta>0$ so small that
\begin{equation}\label{eq:eta}
	f_{j-1}(z)\in\{c_0,\ldots,c_j\}\quad
	\text{for no $z\in R_j\overline\b_n$ with $\dist(z,V)\le\eta$};
\end{equation}
recall the definition of $V$ in \eqref{eq:V} and that the sets $A=\{a_0,a_1,a_2,\ldots\}$ and $C=\{c_0,c_1,c_2,\ldots\}$ are disjoint.

Let $L_j$ be a tangent labyrinth in the spherical shell $R_j\b_n\setminus r_j\overline \b_n$ (see Definition \ref{def:labyrinth}) satisfying condition {\rm (6$_j$)} and such that
\begin{equation}\label{eq:diamLj}
	\diam(T)<\eta\quad \text{for every connected component $T$ of $L_j$}.
\end{equation}
Existence of such a tangent labyrinth is ensured by \cite[Lemma 2.3]{Alarcon2018Foliations}. Denote by $\Lambda_V$ the union of all connected components of $L_j$ having nonempty intersection with $V$ and set $\Lambda_0=L_j\setminus \Lambda_V$. (It could be $\Lambda_V=\varnothing$ or $\Lambda_0=\varnothing$, this is not any restriction.) Obviously, both $\Lambda_V$ and $\Lambda_0$ are compact,
\begin{equation}\label{eq:Lj}
	L_j=\Lambda_V\cup\Lambda_0,
	\quad\text{and}\quad
	\Lambda_V\cap\Lambda_0=\varnothing.
\end{equation}
Since, by definition, $\Lambda_0\cap V=\varnothing$, we have that $f_{j-1}(\Lambda_0)\cap \{a_0,\ldots,a_j\}=\varnothing$
in view of \eqref{eq:V}.
On the other hand, \eqref{eq:eta}, \eqref{eq:diamLj}, and the fact that every component of $\Lambda_V$ intersects $V$ imply that
\begin{equation}\label{eq:LambdaV}
	f_{j-1}(\Lambda_V)\cap \{c_0,\ldots,c_j\}=\varnothing.
\end{equation}

Since $\rho_j>R_j$ and  $V$ is a closed complex submanifold of $\c^n$, we have that the set $\rho_j\b_n\setminus (R_j\overline\b_n\cup V)$ is open and nonempty.
Choose an open Euclidean ball $\Omega_j$ in $\c^n$ with 
\begin{equation}\label{eq:Omegaj}
	\overline\Omega_j\subset \rho_j\b_n\setminus (R_j\overline\b_n\cup V).
\end{equation}	
 We claim that
\begin{equation}\label{eq:Kallin}
	\text{$r_j\overline\b_n\cup L_j\cup \overline\Omega_j$ is a polynomially convex compact set in $\c^n$.}
\end{equation}
Indeed, recall first that, since $L_j$ is a tangent labyrinth in $R_j\b_n\setminus r_j\overline \b_n$, the compact set $r_j\overline\b_n\cup L_j\subset R_j\b_n$ is polynomially convex as it was pointed out in \cite{AlarconForstneric2017PAMS}; see also \cite[Remark 2.4]{Alarcon2018Foliations}. On the other hand, a standard application of Kallin's lemma (see \cite{Kallin1965} or \cite[p.\ 62]{Stout2007PM}) and the Oka-Weil theorem (see \cite[Theorem 1.5.1]{Stout2007PM}) shows that if $X$ and $Y$ are a pair of disjoint compact convex sets in $\c^n$ and $K\subset X$ is a compact polynomially convex set, then the union $K\cup Y$ is polynomially convex. Taking into account \eqref{eq:Omegaj}, this well-known fact applies with $X=R_j\overline\b_n$, $Y=\overline\Omega_j$, and $K=r_j\overline\b_n\cup L_j$, thereby proving \eqref{eq:Kallin}.

We now claim the following.
\begin{claim}\label{cl:2}
There are an open neighborhood $W$ of $r_j\overline\b_n\cup V\cup L_j\cup \overline\Omega_j$ in $\c^n$ and a holomorphic submersion $\psi\colon W\to\c^q$ satisfying the following properties.
\begin{itemize}
\item[\rm (A)] $\psi(z)=f_{j-1}(z)$ for all $z\in r_j\overline\b_n\cup V\cup \Lambda_V$. In particular, the same equality holds at every point $z$ in the connected component of $W$ containing $r_j\overline\b_n\cup V\cup \Lambda_V$.

\smallskip
\item[\rm (B)] $\psi(\Lambda_0)\cap \{c_0,\ldots,c_j\}=\varnothing$.

\smallskip
\item[\rm (C)] $\lambda_j\overline\b_q\subset \psi(\Omega_j)$.

\smallskip
\item[\rm (D)] There is a $q$-coframe $\theta_j$ on $\c^n$ with  $\theta_j|_W=d\psi$ (see Definition \ref{def:qcoframe}).
\end{itemize}
\end{claim}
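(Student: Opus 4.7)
The plan is to define $\psi$ piecewise on pairwise disjoint open neighborhoods of the three sets $P_1 := r_j\overline\b_n \cup V \cup \Lambda_V$, $\Lambda_0$, and $\overline\Omega_j$ as an affine modification of $f_{j-1}$, and then extend the differential $d\psi$ to a global $q$-coframe on $\c^n$ of the form (scalar function)$\cdot df_{j-1}$.

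First I would check that $P_1$, $\Lambda_0$, and $\overline\Omega_j$ are pairwise disjoint. Indeed, $\Lambda_0 \cap V = \varnothing$ by the definition of $\Lambda_0$, $\Lambda_0 \cap \Lambda_V = \varnothing$ by \eqref{eq:Lj}, $\Lambda_0 \cap r_j\overline\b_n = \varnothing$ because $L_j \subset R_j\b_n \setminus r_j\overline\b_n$, and $\overline\Omega_j$ is disjoint from $V$, $r_j\overline\b_n$, and $L_j$ by \eqref{eq:Omegaj} together with $L_j \subset R_j\b_n$. Since $P_1$ is closed while $\Lambda_0$ and $\overline\Omega_j$ are compact, I can choose pairwise disjoint open neighborhoods $W_1 \supset P_1$, $W_2 = \bigsqcup_T W_2^T \supset \Lambda_0$ (one $W_2^T$ per connected component $T$ of $\Lambda_0$), and $W_3 \supset \overline\Omega_j$ whose closures remain pairwise disjoint, and I set $W := W_1 \cup W_2 \cup W_3$.

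On $W$ I declare
\[
\psi := \begin{cases} f_{j-1} & \text{on } W_1, \\ f_{j-1} + b_T & \text{on each } W_2^T, \\ \mu\, f_{j-1} + v & \text{on } W_3, \end{cases}
\]
with constants $b_T \in \c^q$, $\mu \in \c\setminus\{0\}$, and $v \in \c^q$ chosen as follows. For each component $T$ of $\Lambda_0$, the image $f_{j-1}(T)$ is compact and $\{c_0,\ldots,c_j\}$ is finite, so any $b_T$ of sufficiently large modulus satisfies $(f_{j-1}(T) + b_T) \cap \{c_0,\ldots,c_j\} = \varnothing$; this yields (B). Since $f_{j-1}$ is a submersion, hence an open map, $f_{j-1}(\Omega_j)$ contains an open ball $p + \epsilon\b_q$ for some $p \in \c^q$ and $\epsilon > 0$; then taking $|\mu| > \lambda_j/\epsilon$ and $v := -\mu p$ gives $\psi(\Omega_j) \supset |\mu|\epsilon\,\b_q \supset \lambda_j \overline\b_q$, which is (C). Property (A) is immediate from the definition on $W_1$. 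The map $\psi$ is clearly holomorphic on $W$, and $d\psi$ equals $df_{j-1}$ on $W_1 \cup W_2$ and $\mu\,df_{j-1}$ on $W_3$, both of which are $q$-coframes, so $\psi$ is a submersion on $W$.

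For (D) I would take $\theta_j := \nu(z)\cdot df_{j-1}$, with $\nu \colon \c^n \to \c\setminus\{0\}$ continuous, equal to $1$ on $\overline{W_1}\cup\overline{W_2}$ and to $\mu$ on $\overline{W_3}$. Such a $\nu$ is obtained by fixing a continuous path $\gamma \colon [0,1] \to \c\setminus\{0\}$ from $1$ to $\mu$ (available since $\c\setminus\{0\}$ is path-connected) and a Urysohn function $\phi \colon \c^n \to [0,1]$ vanishing on $\overline{W_1}\cup\overline{W_2}$ and equal to $1$ on $\overline{W_3}$ (available since these are disjoint closed sets in the normal space $\c^n$), and then setting $\nu := \gamma\circ\phi$. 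Since $df_{j-1}$ is a $q$-coframe on $\c^n$ and $\nu$ is nowhere zero, $\theta_j$ is a continuous $q$-coframe on $\c^n$ that restricts to $d\psi$ on $W$, as required. The main (if mild) subtlety of the whole argument is precisely this final step: one cannot simply linearly interpolate between $1$ and $\mu$ inside $\c$, since the straight segment might pass through $0$ and so destroy the coframe condition; the path-connectedness of $\c\setminus\{0\}$ is what rescues this and lets $\nu$ be routed away from the origin.
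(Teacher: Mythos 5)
Your construction is correct and essentially the paper's own: there $\psi$ is likewise defined piecewise as $f_{j-1}$ on a large piece, a translate $\xi_0+f_{j-1}$ near $\Lambda_0$, and $\xi_1+\tau_1 f_{j-1}$ near $\overline\Omega_j$ (with $W=(\c^n\setminus\Delta_2)\cup\mathring\Delta_1$ for compact tubular neighborhoods $\Delta_1\Subset\Delta_2$ of $\Lambda_0\cup\overline\Omega_j$), and $d\psi$ is extended to the coframe $\mu\,df_{j-1}$ with a nowhere-vanishing continuous interpolation $\mu$ -- the paper takes the scaling factor $\tau_1>1$ real, so interpolating through $[1,\tau_1]$ trivially avoids $0$ and your path in $\c\setminus\{0\}$ is only needed because you allowed a complex scalar. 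The one detail you leave unaddressed is the ``in particular'' clause of (A), which presupposes that $r_j\overline\b_n\cup V\cup\Lambda_V$ lies in a single connected component of $W$; this is not automatic for an arbitrary choice of your $W_1$, but it is immediate (via the identity principle, since $\psi=f_{j-1}$ on the open set $r_j\b_n$) once you take $W_1$ connected, for instance the complement of the chosen compact convex neighborhoods of the components of $\Lambda_0\cup\overline\Omega_j$, exactly as in the paper where that component is $\c^n\setminus\Delta_2$.
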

We defer the proof of Claim \ref{cl:2} to later on. Granted this claim, the proof of Claim \ref{cl:induction} is completed as follows.

Let $W$ and $\psi\colon W\to\c^q$ be an open neighborhood of $r_j\overline\b_n\cup V\cup L_j\cup\overline\Omega_j$ in $\c^n$ and a holomorphic submersion, respectively, satisfying the conclusion of Claim \ref{cl:2}. 
Fix a number $\epsilon'>0$ to be specified later. 
In view of \eqref{eq:Kallin} and condition {\rm (D)} in Claim \ref{cl:2}, we are allowed to apply the Forstneri\v c-Oka-Weil-Cartan theorem for holomorphic submersions \cite[Theorem 2.5]{Forstneric2003AM} with the Stein manifold $\c^n$, the integer $q\in\{1,\ldots,n-1\}$, the $q$-coframe $\theta_j$ on $\c^n$, the closed complex subvariety $V\subset W$ of $\c^n$, the polynomially convex compact set $r_j\overline\b_n\cup L_j\cup\overline\Omega_j\subset W$, the holomorphic submersion $\psi\colon W\to\c^q$, the number $\epsilon'>0$, and an integer $l\in\n$. This furnishes us with a holomorphic submersion $f_j\colon \c^n\to\c^q$ satisfying the following properties.
\begin{itemize}
\item[\rm (a)] $|f_j(z)-\psi(z)|<\epsilon'$ for all $z\in r_j\overline\b_n\cup L_j\cup\overline\Omega_j$.

\smallskip
\item[\rm (b)] $f_j-\psi$ vanishes to order $l\ge 1$ on $V=f_{j-1}^{-1}(\{a_0,\ldots,a_j\})$ (see \eqref{eq:V}).
\end{itemize}

Assuming that $\epsilon'>0$ is chosen small enough, the following assertions are satisfied.
\begin{itemize}
\item Conditions {\rm (A)} and {\rm (a)} imply {\rm (1$_j$)}.

\smallskip
\item Taking into account that $\overline\Omega_i\subset r_j\b_n$ for all $i\in\{0,\ldots,j-1\}$, properties {\rm (2$_{j-1}$)}, {\rm (A)}, {\rm (C)}, and {\rm (a)} ensure {\rm (2$_j$)}.

\smallskip
\item Conditions \eqref{eq:V}, {\rm (A)}, {\rm (a)}, and {\rm (b)} guarantee {\rm (3$_j$)}.

\smallskip
\item Taking into account that the sets $\{c_0,\ldots,c_i\}$ and $\overline O_i\subset r_j\b_n$ are both compact for all $i\in\{0,\ldots,j-1\}$, properties {\rm (7$_{j-1}$)}, {\rm (A)}, and {\rm (a)} ensure that $f_j(\overline O_i)\cap\{c_0,\ldots,c_i\}=\varnothing$ for every such $i$; that is, the statement in {\rm (7$_j$)} is satisfied for all $i<j$.
\end{itemize}
Thus, since conditions {\rm (4$_j$)}, {\rm (5$_j$)}, and {\rm (6$_j$)} concerning the number $\epsilon_j>0$ and the tangent labyrinth $L_j\subset R_j\b_n\setminus r_j\overline\b_n$ have been already ensured above, in order to conclude the construction of a tuple $S_j=\{f_j,\epsilon_j,L_j,O_j,\Omega_j\}$ satisfying all properties {\rm (1$_j$)}--{\rm (7$_j$)}, granted the existence of $W$ and $\psi$, it suffices to find an open neighborhood $O_j$ of $L_j$ in $\c^n$, with $\overline O_j\subset R_j\b_n\setminus r_j\overline\b_n$, for which $f_j(\overline O_j)\cap \{c_0,\ldots,c_j\}=\varnothing$ holds whenever $\epsilon'>0$ is small enough; indeed, that would complete the proof of {\rm (7$_j$)}. Nevertheless, conditions \eqref{eq:Lj}, \eqref{eq:LambdaV}, {\rm (A)}, {\rm (B)}, and {\rm (a)} ensure that $f_j(L_j)\cap \{c_0,\ldots,c_j\}=\varnothing$ provided that $\epsilon'>0$ is sufficiently small, hence such a neighborhood $O_j$ trivially exists; recall that $\Lambda_V$, $\Lambda_0$, and $\{c_0,\ldots,c_j\}$ are all compact.

This closes the induction and concludes the proof of Claim \ref{cl:induction} under the assumption that Claim \ref{cl:2} holds true.


\begin{proof}[Proof of Claim \ref{cl:2}]
We first define the set $W$.
Recall that $L_j=\Lambda_V\cup\Lambda_0\subset R_j\b_n\setminus r_j\overline\b_n$ and $\overline \Omega_j\subset \rho_j\b_n\setminus R_j\overline\b_n$. In particular, we have that $L_j\cap\overline \Omega_j=\varnothing$. Also recall that both $\overline\Omega_j$ and $\Lambda_0$ are disjoint from $r_j\overline\b_n\cup V$ (see \eqref{eq:Omegaj} and the definition of $\Lambda_0$ just above \eqref{eq:Lj}, and take into account that $r_j<R_j<\rho_j$).
We shall choose the open set $W$ of the form
\begin{equation}\label{eq:W}
	W=(\c^n\setminus \Delta_2)\cup \mathring\Delta_1,
\end{equation}
where $\Delta_1$ and $\Delta_2$ are compact tubular neighborhoods of $\Lambda_0\cup\overline\Omega_j$ in $\c^n$ satisfying the following properties.
\begin{itemize}
\item $\Delta_1\Subset \Delta_2\Subset \c^n$.

\smallskip
\item Each component $U_2$ of $\Delta_2$ contains a unique component $U_1$ of $\Delta_1$. Moreover, there is a homeomorphism from $U_2$ to $2\overline \b_n$ mapping $U_1$ homeomorphically into $\overline\b_n$.

\smallskip
\item Each component of $\Delta_1$ intersects a unique component of $\Lambda_0\cup\overline\Omega_j$, which is contained in its interior.

\smallskip
\item $(r_j\overline\b_n\cup V\cup \Lambda_V)\cap \Delta_2=\varnothing$.
\end{itemize}
Such neighborhoods $\Delta_1$ and $\Delta_2$ clearly exist since $\Lambda_0\cup\overline\Omega_j$ has finitely many connected components all which are compact and convex, the set $r_j\overline\b_n$ is compact, the set $V$ is a closed complex submanifold of $\c^n$, and $(\Lambda_0\cup\overline\Omega_j)\cap(r_j\overline\b_n\cup V)=\varnothing$; we refer to \cite[proof of Claim 3.4]{Alarcon2018Foliations} for the details in the slightly simpler case when $\overline\Omega_j=\varnothing$. In our case, we just need to take $\Delta_1$ and $\Delta_2$ with one more component containing the ball $\overline\Omega_j$. 

Fix $W$ as in \eqref{eq:W} and satisfying the aforementioned conditions. It turns out that $\c^n\setminus \Delta_2$ is a connected open neighborhood of $r_j\overline\b_n\cup V\cup \Lambda_V$, $\mathring \Delta_1$ is an open neighborhood of $\Lambda_0\cup\overline\Omega_j$, and $(\c^n\setminus\mathring \Delta_2)\cap \Delta_1=\varnothing$. Thus, since $L_j=\Lambda_V\cup\Lambda_0$, we have that $W$ is an open neighborhood of $r_j\overline\b_n\cup V\cup L_j\cup\overline\Omega_j$, as required. 

We now turn to define the holomorphic submersion $\psi\colon W\to\c^q$. Since the map $f_{j-1}\colon\c^n\to\c^q$ is continuous and the sets $\Lambda_0\subset\c^n$ and $\{c_0,\ldots,c_j\}$ are compact, there is $\xi_0\in\c^q$ such that 
\begin{equation}\label{eq:xi0}
	(\xi_0+f_{j-1}(\Lambda_0))\cap\{c_0,\ldots,c_j\}=\varnothing.
\end{equation}
Indeed, we may for instance choose any $\xi_0\in\c^q$ with $|\xi_0|>2\max\{|z|\colon z\in f_{j-1}(\Lambda_0)\cup\{c_0,\ldots,c_j\}\}$.
On the other hand, since $f_{j-1}$ is a holomorphic submersion, hence an open map, there are $\xi_1\in\c^q$ and $\tau_1>1$ such that
\begin{equation}\label{eq:xi1}
	\lambda_j\overline\b_q\subset \xi_1+\tau_1 f_{j-1}(\Omega_j).
\end{equation}
Indeed, we may for instance take first $z\in\c^q$ with $-z\in f_{j-1}(\Omega_j)$. Thus, $z+f_{j-1}(\Omega_j)$ contains the origin, hence also the closed ball $r\overline\b_q$ for some $r>0$. Then, it suffices to choose $\tau_1>\max\{1,\lambda_j/r\}$ and $\xi_1=\tau_1z$.

Denote by $\Delta_1^\Omega$ the connected component of $\Delta_1$ containing $\overline\Omega_j$. We have that $\mathring \Delta_1\setminus \Delta_1^\Omega$ is an open neighborhood of $\Lambda_0$. We define $\psi\colon W\to\c^q$ by
\[
	W\ni z\longmapsto \psi(z)=\left\{
	\begin{array}{ll}
	f_{j-1}(z) & \text{for all $z\in \c^n\setminus \Delta_2$}\smallskip
	\\
	\xi_0+ f_{j-1}(z) & \text{for all $z\in\mathring \Delta_1\setminus \Delta_1^\Omega$}\smallskip
	\\
	\xi_1+\tau_1f_{j-1}(z) & \text{for all $z\in \mathring\Delta_1^\Omega$}.
	\end{array}\right.
\]
Since $(\c^n\setminus \mathring \Delta_2)\cap \Delta_1=\varnothing$, we have that $\psi$ is well defined and continuous.
Further, since $f_{j-1}$ is submersive everywhere in $\c^n$ and $\xi_0$, $\xi_1$, and $\tau_1\neq 0$ are constants, we infer that $\psi$ is a holomorphic submersion. Conditions {\rm (A)}, {\rm (B)}, and {\rm (C)} in the statement of the claim clearly follow from the definition of $\psi$ and the choice of $\xi_0$ in \eqref{eq:xi0} and of $\xi_1$ and $\tau_1$ in \eqref{eq:xi1}. 

It remains to construct a $q$-coframe $\theta_j$ on $\c^n$ with $\theta_j|_W=d\psi$. For that we denote by $\Delta_2^\Omega$ the connected component of $\Delta_2$ containing $\Delta_1^\Omega$ and take any continuous map $\mu\colon \Delta_2^\Omega\setminus \mathring \Delta_1^\Omega\to [1,\tau_1]$ such that 
\begin{equation}\label{eq:mu}
 \left\{\begin{array}{ll}
 \mu(z)=1 & \text{for all $z\in  b \Delta_2^\Omega$}\smallskip
 \\
 \mu(z)=\tau_1 & \text{for all $z\in  b \Delta_1^\Omega$};
 \end{array}\right.
\end{equation}
recall that $\tau_1>1$.
Such a function clearly exists since $\Delta_2^\Omega\setminus\mathring \Delta_1^\Omega$ is a closed spherical shell in $\c^n$; one can also invoke Urysohn's lemma. We define
\[
	\theta_j=(\theta_{j,1},\ldots,\theta_{j,q})=
\left\{\begin{array}{ll}
	df_{j-1} & \text{on $\c^n\setminus\Delta_2^\Omega$}\smallskip
	\\
	\mu\, df_{j-1} & \text{on $\Delta_2^\Omega\setminus\mathring \Delta_1^\Omega$}\smallskip
	\\
	\tau_1\, df_{j-1} & \text{on $\mathring\Delta_1^\Omega$}.	
\end{array}\right.
\]
By \eqref{eq:mu}, $\theta_j$ is a continuous $1$-form on $\c^n$. Moreover, since $f_{j-1}$ is submersive everywhere on $\c^n$, $\tau_1\neq 0$, and $\mu$ vanishes nowhere on $\Delta_2^\Omega\setminus \mathring \Delta_1^\Omega$, we have that $\theta_{j,1},\ldots,\theta_{j,q}$ are linearly independent at every point in $\c^n$, that is, $\theta_j$ is a $q$-coframe on $\c^n$. Finally, it trivially follows from the definitions of $W$, $\psi$, and $\theta_j$ that $\theta_j|_W=d\psi$; recall that $\xi_0$, $\xi_1$, and $\tau_1$ are constants. This guarantees condition {\rm (D)} in the statement of Claim \ref{cl:2}, thereby concluding the proof.
\end{proof}
The proof of Claim \ref{cl:induction} is complete.
\end{proof}
Theorem \ref{th:intro-main} is proved.


\section{Proof of Theorem \ref{th:intro}}\label{sec:proof}

\noindent The proof follows closely that of Theorem \ref{th:intro-main}; we explain the modifications that need to be made. Let $n$, $q$, $\wt A$, $\wt C$, $A$, and $C$ be as in the statement of Theorem \ref{th:intro}. By the assumptions, both $\wt C$ and $A$ are countable sets; we assume that they both are infinite (otherwise the proof is even simpler) and write
\[
	A=\{a_0,a_1,a_2,\ldots\} \quad\text{and}\quad
	\wt C=\{c_0,c_1,c_2,\ldots\}.
\]
Choose sequences of positive numbers $\lambda_j$, $r_j$, $R_j$, and $\rho_j$, $j\in\z_+$, an open Euclidean ball $\Omega_0$ in $\c^n$, and a holomorphic submersion $f_0\colon\c^n\to\c^q$ as those in the proof of Theorem \ref{th:intro-main}. In particular, we assume that conditions \eqref{eq:lambda>}--\eqref{eq:f0} are satisfied. We also fix $\epsilon_0>0$ and call $L_0=O_0=\varnothing$.
\begin{claim}\label{lem:1}
There is a sequence $S_j=\{f_j,\epsilon_j,L_j,O_j,\Omega_j\}$ $(j\in\n)$, where $f_j$, $\epsilon_j$, $L_j$, $O_j$, and $\Omega_j$ are as in Claim \ref{cl:induction}, such that conditions {\rm (1$_j$)}--{\rm (6$_j$)} in that claim and the following properties are satisfied for all $j\ge 1$.
\begin{itemize}
\item[\rm (7$'_j$)] $f_j(\overline O_i)\cap \wt C=\varnothing$ for all $i\in\{0,\ldots,j\}$. 

\smallskip
\item[\rm (8$'_j$)] If $z\in L_j$, then either $|f_j(z)|>\lambda_j$ or $\displaystyle\dist(f_j(z),\{a_0,\ldots,a_j\})< \frac1{\lambda_j}$.
\end{itemize}
\end{claim}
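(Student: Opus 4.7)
The plan is to run the same inductive construction as in Claim \ref{cl:induction}, adjusted so as to replace {\rm (7$_j$)} by the new {\rm (7'$_j$)} and to secure the additional conclusion {\rm (8'$_j$)}. The base step is unchanged: the tuple $S_0=\{f_0,\epsilon_0,\varnothing,\varnothing,\Omega_0\}$ fixed above does the job. For the inductive step, assuming $S_0,\ldots,S_{j-1}$ have been constructed, I would first choose $\epsilon_j>0$ satisfying {\rm (4$_j$)} and {\rm (5$_j$)} as before, and then select the tangent labyrinth $L_j\subset R_j\b_n\setminus r_j\overline\b_n$ satisfying {\rm (6$_j$)}, but now with the extra requirement that each of its components have diameter so small that, by uniform continuity of $f_{j-1}$ on $R_j\overline\b_n$, the image $f_{j-1}(z)$ lies within distance $1/(2\lambda_j)$ of $\{a_0,\ldots,a_j\}$ for every $z\in\Lambda_V$; here $V=f_{j-1}^{-1}(\{a_0,\ldots,a_j\})$ and $\Lambda_V$ is the union of the components of $L_j$ meeting $V$, exactly as in the proof of Claim \ref{cl:induction}.

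Next, I would reconstruct the auxiliary submersion $\psi$ of Claim \ref{cl:2} with two adaptations. First, since $\wt C\subset\c^q$ is closed and discrete while $f_{j-1}(R_j\overline\b_n)$ is compact, the intersection $\wt C\cap f_{j-1}(R_j\overline\b_n)$ is finite, and the same argument that produced \eqref{eq:eta} and \eqref{eq:LambdaV} yields an $\eta>0$ working with $\wt C$ in place of $\{c_0,\ldots,c_j\}$. Second, I would choose the translation vector $\xi_0\in\c^q$ used to define $\psi$ on the tubular neighborhood of $\Lambda_0=L_j\setminus\Lambda_V$ to be large enough so that $|\xi_0+f_{j-1}(z)|>\lambda_j$ for all $z\in\Lambda_0$, while simultaneously arranging $(\xi_0+f_{j-1}(\Lambda_0))\cap\wt C=\varnothing$; both requirements can be met by a small perturbation of $\xi_0$, because only finitely many points of $\wt C$ lie in any given bounded region. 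With $\psi$ and the associated $q$-coframe $\theta_j$ so constructed, applying the Forstneri\v c-Oka-Weil-Cartan theorem \cite[Theorem 2.5]{Forstneric2003AM} produces a holomorphic submersion $f_j\colon\c^n\to\c^q$ that approximates $\psi$ on $r_j\overline\b_n\cup L_j\cup\overline\Omega_j$ and interpolates it to sufficiently high order along $V$.

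Provided the approximation is tight enough, {\rm (1$_j$)}--{\rm (3$_j$)} follow exactly as in Claim \ref{cl:induction}; on $\Lambda_V$, $f_j(z)$ stays within $1/\lambda_j$ of $\{a_0,\ldots,a_j\}$ by continuity, while on $\Lambda_0$ one has $|f_j(z)|>\lambda_j$, yielding {\rm (8'$_j$)}. Taking $O_j$ to be a sufficiently small open neighborhood of $L_j$ in the shell $R_j\b_n\setminus r_j\overline\b_n$, the compact set $f_j(\overline O_j)$ lies in a small neighborhood of $\psi(L_j)$ which, by construction, is disjoint from $\wt C$; this proves {\rm (7'$_j$)} for $i=j$, and the cases $i<j$ follow from {\rm (7'$_{j-1}$)} together with {\rm (1$_j$)} and the tightness of the approximation on $r_j\overline\b_n\supset\overline O_i$.

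I expect the main obstacle to be the simultaneous enforcement of {\rm (3$_j$)}, {\rm (7'$_j$)}, and {\rm (8'$_j$)} along $\Lambda_V$: there the value of $\psi$ is pinned to equal $f_{j-1}$ so as to preserve {\rm (3$_j$)}, so a shift cannot be used to push $f_j$ away from $\{a_0,\ldots,a_j\}$, and only the first alternative of {\rm (8'$_j$)} remains available---which is precisely why that condition is stated as a disjunction. Compatibility with {\rm (7'$_j$)} on $\Lambda_V$ is then guaranteed by $A\cap\wt C=\varnothing$ together with the closedness of $\wt C$, which force $\dist(\{a_0,\ldots,a_j\},\wt C)>0$; the components in $\Lambda_0$ realize the \emph{large modulus} alternative of {\rm (8'$_j$)} via the shift $\xi_0$, while those in $\Lambda_V$ realize the \emph{close to $A$} alternative by the small-diameter choice. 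Once this dichotomy is in place, the rest of the argument carries over verbatim from Claim \ref{cl:induction}.
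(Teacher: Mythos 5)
Your overall strategy is the same as the paper's: keep the induction of Claim \ref{cl:induction}, shrink the labyrinth components so that $f_{j-1}(\Lambda_V)$ stays $1/\lambda_j$-close to $\{a_0,\ldots,a_j\}$ and away from $\wt C$, modify $\psi$ only on a tubular neighborhood of $\Lambda_0\cup\overline\Omega_j$, and recover (7$'_j$) and (8$'_j$) from the approximation, using that $\wt C$ is closed so that compact sets disjoint from it are at positive distance. All of that matches the paper, and your treatment of the base step, of $(1_j)$--$(6_j)$, of the case $i<j$ in (7$'_j$), and of the dichotomy in (8$'_j$) is fine.

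There is, however, a genuine gap at the one point where the proof really differs from Claim \ref{cl:2}: your choice of $\xi_0$. You define $\psi=\xi_0+f_{j-1}$ near $\Lambda_0$ and claim that a large $\xi_0$, adjusted by ``a small perturbation'', gives both $|\xi_0+f_{j-1}(z)|>\lambda_j$ on $\Lambda_0$ and $(\xi_0+f_{j-1}(\Lambda_0))\cap\wt C=\varnothing$, ``because only finitely many points of $\wt C$ lie in any given bounded region''. This does not follow. The set $f_{j-1}(\Lambda_0)$ is not small: $\Lambda_0$ consists of (possibly very many) balls of real dimension $2n-1$ spread through the whole shell $R_j\b_n\setminus r_j\overline\b_n$, and its image under the open map $f_{j-1}$ is a compact set over which you have no control; it may have nonempty interior and large diameter. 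The set of admissible translations is the complement of $\wt C-f_{j-1}(\Lambda_0)$, and since $\wt C$ is only assumed closed and discrete (e.g.\ a very fine lattice in $\c^q$), this bad set can contain an entire neighborhood of infinity, so no translation at all need satisfy your second requirement; in particular a small perturbation cannot escape a point of $\wt C$ lying in the interior of $\xi_0+f_{j-1}(\Lambda_0)$. The paper's proof fixes exactly this by composing with a contraction: it takes $\xi_0\in\c^q\setminus\wt C$ with $|\xi_0|>\lambda_j$ and then a factor $\tau_0\in(0,1)$ so small that $\xi_0+\tau_0 f_{j-1}(\Lambda_0)$ lies in a small ball around $\xi_0$, hence inside $\{z\in\c^q\setminus\wt C\colon|z|>\lambda_j\}$ (using $\dist(\xi_0,\wt C)>0$), and sets $\psi=\xi_0+\tau_0 f_{j-1}$ on that part of $W$; correspondingly, the cut-off $\mu$ in the $q$-coframe must take values in $[\tau_0,\tau_1]$, equal to $\tau_0$ on $b\Delta_1\setminus b\Delta_1^\Omega$. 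With this replacement (and the ensuing minor change in the coframe), the rest of your argument goes through as written.
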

\begin{proof}
We explain the induction. Note that {\rm (2$_0$)} and {\rm (7$'_0$)} are satisfied by the fixed tuple $S_0=\{f_0,\epsilon_0,L_0,O_0,\Omega_0\}$ (condition {\rm (8$'_0$)} holds too, but this is not relevant for the argument). Assume that for some integer $j\in\n$ we have for each $i\in\{0,\ldots,j-1\}$ a tuple $S_i$ satisfying {\rm (2$_i$)} and {\rm (7$'_i$)}, and let us provide $S_j$ enjoying conditions {\rm (1$_j$)}--{\rm (6$_j$)}, {\rm (7$'_j$)}, and {\rm (8$'_j$)}. 

Fix $\epsilon_j>0$ satisfying {\rm (4$_j$)} and {\rm (5$_j$)}, define $V$ as in \eqref{eq:V}, and choose $\eta>0$ so small that
\begin{equation}\label{eq:eta22}
	\dist(f_{j-1}(z),\{a_0,\ldots,a_j\})< \frac1{\lambda_j}\quad
	\forall z\in R_j\overline\b_n \text{ with $\dist(z,V)\le\eta$}.
\end{equation}
Since $\{a_0,\ldots,a_j\}\cap \wt C=\varnothing$ and $\wt C\subset \c^q$ is closed, we may in addition assume,  by continuity of $f_{j-1}$ and compactness of $V\cap R_j\overline\b_n$, that $\eta>0$ is so small that
\begin{equation}\label{eq:eta'22}
	f_{j-1}(z)\in \wt C\quad 
	\text{for no $z\in R_j\overline\b_n$ with $\dist(z,V)\le\eta$}.
\end{equation}  
Let $L_j$, $\Lambda_V$, and $\Lambda_0$ be defined as in the proof of Claim \ref{cl:induction}; in particular, $L_j$ is a tangent labyrinth in $R_j\b_n\setminus r_j\overline \b_n$ satisfying conditions {\rm (6$_j$)}, \eqref{eq:diamLj}, and \eqref{eq:Lj}. By \eqref{eq:eta22}, \eqref{eq:eta'22}, and \eqref{eq:diamLj}, we have that
\begin{equation}\label{eq:LambdaV22}
	\dist(f_{j-1}(z),\{a_0,\ldots,a_j\})< \frac1{\lambda_j}\quad 
	\text{for all $z\in \Lambda_V$}
\end{equation}
and
\begin{equation}\label{eq:LambdaV'22}
	f_{j-1}(\Lambda_V)\cap \wt C=\varnothing.
\end{equation}
Also choose an open Euclidean ball $\Omega_j$ in $\c^n$ satisfying \eqref{eq:Omegaj}.
\begin{claim}\label{cl:222}
There are an open neighborhood $W$ of $r_j\overline\b_n\cup V\cup L_j\cup \overline\Omega_j$ in $\c^n$ and a holomorphic submersion $\psi\colon W\to\c^q$ satisfying
\begin{itemize}
\item[\rm (B$'$)] $\psi(\Lambda_0)\cap (\wt C\cup\lambda_j\overline\b_q)=\varnothing$
\end{itemize}
as well as conditions {\rm (A)}, {\rm (C)}, and {\rm (D)} in Claim \ref{cl:2}.
\end{claim}
\begin{proof}
We choose $W$, $\Delta_1$, $\Delta_2$, $\Delta_1^\Omega$, and $\Delta_2^\Omega$ as in the proof of Claim \ref{cl:2}; in particular, $W$ is of the form \eqref{eq:W}.  Take a point $\xi_0\in \c^q\setminus \wt C$ with $|\xi_0|>\lambda_j$ and fix a number $\tau_0$, $0<\tau_0<1$, so small that
\begin{equation}\label{eq:clambda22}
	\xi_0+\tau_0 f_{j-1}(\Lambda_0)\subset \{z\in\c^q\setminus \wt C\colon |z|>\lambda_j\}. 
\end{equation}
Existence of such a $\tau_0$ is ensured by compactness of $\Lambda_0$ and continuity of $f_{j-1}$; recall that $\wt C\subset\c^q$ is closed, and so $\dist(\xi_0,\wt C)>0$. We also choose $\xi_1\in\c^q$ and $\tau_1>1$ satisfying \eqref{eq:xi1} and define $\psi\colon W\to\c^q$ by
\[
	W\ni z\longmapsto \psi(z)=\left\{
	\begin{array}{ll}
	f_{j-1}(z) & \text{for all $z\in \c^n\setminus \Delta_2$}\smallskip
	\\
	\xi_0+\tau_0 f_{j-1}(z) & \text{for all $z\in\mathring \Delta_1\setminus \Delta_1^\Omega$}\smallskip
	\\
	\xi_1+\tau_1 f_{j-1}(z) & \text{for all $z\in\mathring \Delta_1^\Omega$}.
	\end{array}\right.
\]
Thus, $\psi$ is a well-defined holomorphic submersion that, in view of \eqref{eq:clambda22} and \eqref{eq:xi1}, satisfies conditions {\rm (A)}, {\rm (B$'$)}, and {\rm (C)}. 

In order to construct a $q$-coframe $\theta_j$ on $\c^n$ satisfying {\rm (D)} we first take any continuous map $\mu\colon \Delta_2\setminus \mathring \Delta_1\to [\tau_0,\tau_1]$ such that 
\begin{equation}\label{eq:mu22}
 \left\{\begin{array}{ll}
 \mu(z)=1 & \text{for all $z\in  b \Delta_2$}\smallskip
 \\
 \mu(z)=\tau_0 & \text{for all $z\in  b \Delta_1\setminus b \Delta_1^\Omega$}\smallskip
 \\
 \mu(z)=\tau_1 & \text{for all $z\in  b \Delta_1^\Omega$};
 \end{array}\right.
\end{equation}
recall that $0<\tau_0<1<\tau_1$.
Such a function clearly exists in view of the topology of $\Delta_2\setminus\mathring \Delta_1$: a finite union of pairwise disjoint closed shells in $\c^n$. Finally, we define
\[
	\theta_j=(\theta_{j,1},\ldots,\theta_{j,q})=\left\{\begin{array}{ll}
	df_{j-1} & \text{on $\c^n\setminus\Delta_2$}\smallskip
	\\
	\mu\, df_{j-1} & \text{on $\Delta_2\setminus\mathring \Delta_1$}\smallskip
	\\
	\tau_0\, df_{j-1} & \text{on $\mathring\Delta_1\setminus \Delta_1^\Omega$}\smallskip
	\\
	\tau_1\,df_{j-1} & \text{on $\mathring\Delta_1^\Omega$}.
	\end{array}\right.
\]
It turns out that $\theta_j$ is a $q$-coframe on $\c^n$ with $\theta_j|_W=d\psi$. This shows condition {\rm (D)} and concludes the proof of the claim.
\end{proof}

Pick $W$ and $\psi$ as in Claim \ref{cl:222} and fix a number $\epsilon'>0$ to be specified later. By \cite[Theorem 2.5]{Forstneric2003AM}, there is a holomorphic submersion $f_j\colon \c^n\to\c^q$ satisfying\begin{itemize}
\item[\rm (a)] $|f_j(z)-\psi(z)|<\epsilon'$ for all $z\in r_j\overline\b_n\cup L_j\cup\overline\Omega_j$ and

\smallskip
\item[\rm (b)] $f_j-\psi$ vanishes to order $l\ge 1$ everywhere on $V$.
\end{itemize}
Assuming that $\epsilon'>0$ is chosen sufficiently small, conditions {\rm (1$_j$)}--{\rm (6$_j$)} are seen as in the proof of Claim \ref{cl:induction}. Moreover, {\rm (8$'_j$)} follows from \eqref{eq:lambda>}, \eqref{eq:Lj}, \eqref{eq:LambdaV22}, {\rm (A)}, {\rm (B$'$)}, and {\rm (a)} provided that $\epsilon'>0$ is small enough. Finally, by conditions \eqref{eq:LambdaV'22}, {\rm (B$'$)}, and {\rm (a)} and the facts that $f_j$ is continuous, $L_j$ is compact, and $\wt C$ is a closed discrete subset of $\c^q$, we may choose $\epsilon'>0$ so small that, in addition to the above, $f_j(L_j)\cap \wt C=\varnothing$. Therefore, there is an open neighborhood $O_j$ of $L_j$ with $\overline O_j\subset R_j\b_n\setminus r_j\overline\b_n$ such that $f_j(\overline O_j)\cap\wt C=\varnothing$. Together with {\rm (7$'_{j-1}$)}, {\rm (A)}, and {\rm (a)}, this shows that condition {\rm (7$'_j$)} is satisfied whenever that $\epsilon'>0$ is chosen sufficiently small. Summarizing, the tuple $S_j=\{f_j,\epsilon_j,L_j,O_j,\Omega_j\}$ satisfies all properties {\rm (1$_j$)}--{\rm (6$_j$)}, {\rm (7$'_j$)}, and {\rm (8$'_j$)}. This closes the induction process and completes the proof of Claim \ref{lem:1}.
\end{proof}

As in the proof of Theorem \ref{th:intro-main}, there is a limit surjective holomorphic submersion
\[
	f=\lim_{j\to\infty}f_j\colon\b_n\to\c^q
\]
which satisfies \eqref{eq:f-fj-1} and condition {\rm (i)} in the statement of Theorem \ref{th:intro}. 
Let us check condition {\rm (ii)}. Choose a point $c\in C=\c^q\setminus A$. If $c\in \wt C$, then reasoning as in the proof of Theorem \ref{th:intro-main} properties {\rm (7$'_j$)} ensure that $f^{-1}(c)$ is complete. 
Assume now that $c\notin \wt C$, hence $c\in \c^q\setminus(A\cup \wt C)$. By the assumptions, $A$ is a closed subset of $\c^q\setminus \wt C$, hence $\dist(c,A)>0$. Choose $j_0\in\n$ so large that 
\begin{equation}\label{eq:j0}
	|c|+\epsilon_j<\lambda_j
	\quad\text{and}\quad
	 \frac1{\lambda_j}+\epsilon_j<\dist(c,A)\quad \text{for all $j\ge j_0$}.
\end{equation}
Such a number exists by \eqref{eq:limnu} and the fact that $\lim_{j\to\infty}\epsilon_j=0$ (see properties {\rm (4$_j$)}). If $z\in L_j$ for some $j\in\z_+$, then {\rm (8$'_j$)} and \eqref{eq:f-fj-1} imply that either $|f(z)|>\lambda_j-\epsilon_j$ or $\dist(f(z),A)\le\dist(f(z),\{a_0,\ldots,a_j\})<1/\lambda_j+\epsilon_j$. This and \eqref{eq:j0} show that 
$f^{-1}(c)\cap L_j=\varnothing$ for all $j\ge j_0$,
which, by properties {\rm (6$_j$)}, ensures that $f^{-1}(c)$ is complete. This proves condition {\rm (ii)} and concludes the proof of Theorem \ref{th:intro}.


\subsection*{Acknowledgements}
This research was partially supported by the State Research Agency (AEI) 
via the grants no.\ MTM2017-89677-P and PID2020-117868GB-I00, and the ``Maria de Maeztu'' Excellence Unit IMAG, reference CEX2020-001105-M, funded by MCIN/AEI/10.13039/501100011033/; the Junta de Andaluc\'ia grant no. P18-FR-4049; and the Junta de Andaluc\'ia - FEDER grant no. A-FQM-139-UGR18; Spain.




\medskip
\noindent Antonio Alarc\'{o}n

\noindent Departamento de Geometr\'{\i}a y Topolog\'{\i}a e Instituto de Matem\'aticas (IMAG), Universidad de Granada, Campus de Fuentenueva s/n, E--18071 Granada, Spain.

\noindent  e-mail: {\tt alarcon@ugr.es}

\end{document}